\pgfplotsset{compat=newest}
\newlength\figurewidth
\newsavebox{\gwbox}\newlength{\gwidth}
\newcommand{\defdelim}[1]{\ifx\relax#1\relax\def\lsize{\left}\def\rsize{\right}\else\def\lsize{#1}\def\rsize{#1}\fi}
\newcommand{\floor}[2][]{\defdelim{#1}\lsize\lfloor#2\rsize\rfloor}
\newcommand{\ceil}[2][]{\defdelim{#1}\lsize\lceil#2\rsize\rceil}
\newcommand{\abs}[2][]{\defdelim{#1}\lsize\lvert#2\rsize\rvert}
\def\<#1>{\left\langle{#1}\right\rangle}
\newcommand{\E}[2][]{\operatorname{E}_{#1}\left(#2\right)}
\newcommand{\Var}[2][]{\operatorname{Var}_{#1}\left(#2\right)}
\renewcommand{\leq}{\leqslant}
\renewcommand{\geq}{\geqslant}
\DeclareMathOperator{\diag}{\ensuremath{\mathrm{diag}}}
\DeclareMathOperator{\sign}{\ensuremath{\mathrm{sign}}}
\DeclareMathOperator{\erf}{\ensuremath{\mathrm{erf}}}
\DeclareMathOperator{\even}{\ensuremath{\mathrm{even}}}
\DeclareMathOperator{\odd}{\ensuremath{\mathrm{odd}}}
\newcommand{\GOE}{\ensuremath{\mathrm{GOE}}}
\newcommand{\GSE}{\ensuremath{\mathrm{GSE}}}
\newcommand{\GUE}{\ensuremath{\mathrm{GUE}}}
\newcommand{\aGUE}{\ensuremath{\mathrm{aGUE}}}
\newcommand{\LUE}{\ensuremath{\mathrm{LUE}}}
\newcommand{\R}{\ensuremath{\mathbb{R}}}
\newcommand{\N}{\ensuremath{\mathbb{N}}}
\title{The Singular Values of the \boldmath\GOE{}\unboldmath}
\def\draftnote{\today\quad\currenttime\quad FILE\qquad\jobname}
\begin{document}

\markboth{F. Bornemann and M. La\,Croix}
{The Singular Values of the \GOE{}}

\author{Folkmar Bornemann}
\address{Zentrum Mathematik -- M3, 
  Technische Universit\"at M\"unchen\\
  Boltzmannstr. 3, 85748 Garching bei M\"unchen, Germany\\
\email{bornemann@tum.de}}

\author{Michael La$\,$Croix}
\address{Department of Mathematics,
Massachusetts Institute of Technology\\
77 Massachusetts Avenue, 
Cambridge, MA 02139, USA\\ 
\email{malacroi@mit.edu}}

\maketitle 


\begin{abstract} 
  As a unifying framework for examining several properties that
  nominally involve eigenvalues, we present a particular structure of
  the singular values of the Gaussian orthogonal ensemble (\GOE{}):
  the even-location singular values are distributed as the positive
  eigenvalues of a Gaussian ensemble with chiral unitary symmetry, 
  while the odd-location singular values, conditioned on the even-location ones, can be
  algebraically transformed into a set of independent
  $\chi$-distributed random variables.  We discuss three applications
  of this structure: first, there is a pair of bidiagonal square
  matrices, whose singular values are jointly distributed as the even-
  and odd-location ones of the \GOE{}; second, the magnitude of the
  determinant of the \GOE{} is distributed as a product of simple independent random
  variables; third,  on symmetric intervals, the gap probabilities of the \GOE{}
  can be expressed in terms of the Laguerre unitary ensemble. 
  We work specifically with matrices of finite order, but
  by passing to a large matrix limit, we also obtain new insight into
  asymptotic properties such as the central limit theorem of the
  determinant or the gap probabilities in the bulk-scaling limit.  
  The analysis in this paper avoids much of the technical
  machinery (e.g. Pfaffians, skew-orthogonal
  polynomials, martingales, Meijer $G$-function, etc.) that was previously used to analyze
  some of the applications.
\end{abstract}

\keywords{random matrices, GOE, anti-GUE, LUE, singular values,
  determinants, gap probabilities}

\ccode{Mathematics Subject Classification 2000: 15B52, 60B20, 60F05, 62E15}


\section{Introduction}\label{section:introduction}

This paper studies the structure of the singular values of the
Gaussian orthogonal ensemble (\GOE), using it as a unifying framework
for examining several properties that nominally involve eigenvalues.
Here, the $\GOE_n$ of order $n$ is the ensemble of real symmetric
random matrices $$G = (X+X')/2,$$ where $X$ is an $n\times n$ Gaussian
matrix with all entries independent standard normals.  Since the
singular values of symmetric matrices are the magnitudes of the
eigenvalues, the ensemble of singular values will be briefly denoted
by $|\GOE_n|$. Central to our discussion is the immediate set decomposition,
 \begin{equation}\label{eqn:decomp}
 |\GOE_n| = \even |\GOE_n| \;\cup\; \odd |\GOE_n|,
\end{equation}
of the ordered singular values according to the parity of their indices,
where the even-location decimated ensemble $\even |\GOE_n|$ is defined
by taking the 2nd largest, 4th largest, etc.  singular value, and
similarly for $\odd |\GOE_n|$. 

Our \emph{first} set of main results relates the decomposition \eqref{eqn:decomp} to the eigenvalues 
of a Gaussian ensemble with chiral, or
anti-symmetric, unitary symmetry. Namely, with $X$ as above, the
ensemble of real skew-symmetric random matrices $$A=(X-X')/2$$ will be
called the anti-GUE with its (almost surely) different and positive
singular values written briefly as $\aGUE_n$ (if $n$ is odd, there
is a surplus singular value zero, which is omitted).\footnote{In
  this paper, the Gaussian weights are $e^{-\beta x^2/2}$ with
  $\beta=1$ for orthogonal and $\beta=2$ for unitary symmetry.} 
Then, the following structure holds.

\begin{theorem}\label{thm:main1} Denoting equality of the joint
  distribution by $\overset{\rm d}{=}$, there holds
\begin{equation}\label{eqn:even_aGUE}
\even |\GOE_n| \overset{\rm d}{=} \aGUE_n.
\end{equation}
\end{theorem}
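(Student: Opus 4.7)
My plan is to compare joint densities directly: derive the joint density of $\even|\GOE_n|$ by integrating out the odd-location coordinates from the joint density of $|\GOE_n|$, and match the result against the explicit density of $\aGUE_n$.

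The joint density of $\aGUE_n$ is accessible from the Weyl integration formula for the orthogonal action on real skew-symmetric matrices: block-diagonalization of $A=(X-X')/2$ into $2\times 2$ blocks carrying the pairs $\pm i\sigma_j$, together with the Gaussian weight $\exp(-\tr(A^{T}A)/2)=\exp(-\sum\sigma_j^2)$, yields for $n=2m$
$$
p_{\aGUE_n}(\sigma_1,\ldots,\sigma_m) \;\propto\; \prod_{1\le i<j\le m}(\sigma_j^2-\sigma_i^2)^2\prod_{i=1}^m e^{-\sigma_i^2}, \qquad 0<\sigma_1<\cdots<\sigma_m,
$$
with an additional factor $\prod_i\sigma_i^2$ when $n=2m+1$ (the surplus zero contributing to the Jacobian).

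For the left side, the GOE eigenvalue density $\propto\prod_{i<j}|\lambda_i-\lambda_j|\prod_i e^{-\lambda_i^2/2}$ transforms, under $\lambda_i=\epsilon_i s_i$ with $s_i>0$, $\epsilon_i\in\{\pm 1\}$, and summation over the $2^n$ sign patterns, into
$$
p_{|\GOE_n|}(s_1,\ldots,s_n)\;\propto\;\sum_{\epsilon\in\{\pm 1\}^n}\prod_{i<j}|\epsilon_i s_i-\epsilon_j s_j|\prod_i e^{-s_i^2/2}, \qquad s_1>\cdots>s_n>0.
$$
I would then integrate out the odd-location coordinates $s_1, s_3, \ldots$ — each ranging over the interlacing interval $(s_{2k+2}, s_{2k})$ with the convention $s_0=+\infty$ — and check that the resulting marginal of $(s_2, s_4, \ldots)$ matches $p_{\aGUE_n}$. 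The case $n=2$ is reassuring: the sign-sum collapses to $4s_1$, the integral $\int_{s_2}^{\infty} 4s_1 e^{-(s_1^2+s_2^2)/2}\,ds_1 = 4 e^{-s_2^2}$, and this normalizes to the half-normal density $\tfrac{2}{\sqrt\pi} e^{-s_2^2}$, which is exactly $\aGUE_2$.

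The technical heart of the argument lies in pushing this calculation to general $n$. For a fixed sign pattern, the integrand factors according to $S=\{i:\epsilon_i=+\}$ as $\prod_{\{i,j\}\subseteq S}(s_i-s_j)\prod_{\{i,j\}\subseteq S^c}(s_i-s_j)\prod_{i\in S,\,j\in S^c}(s_i+s_j)$; the delicate step is to show that after summing over $\epsilon$ and integrating over the nested interlacing intervals, the huge sum collapses into the clean ``chiral'' Vandermonde $\prod_{i<j}(s_{2i}^2-s_{2j}^2)^2$ together with the factor $\prod_i e^{-s_{2i}^2}$ (and the extra $\prod s_{2i}^2$ if $n$ is odd). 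One obvious route would pass through a de~Bruijn-type Pfaffian identity, but since the paper advertises an analysis that avoids Pfaffians and skew-orthogonal polynomials, the authors likely exploit a more elementary combinatorial rearrangement — perhaps an inductive peeling of the outermost pair $(s_1,s_2)$ that telescopes the interlacing intervals, converting the sign-sum into a product of chiral Vandermondes on the remaining even positions. That combinatorial-analytic step is where the main work will lie.
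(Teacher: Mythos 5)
Your plan coincides in outline with the paper's second proof of Theorem~\ref{thm:main1} (Section~\ref{section:evensingularvalues}): establish the joint density of $|\GOE_n|$, then integrate out the odd-location singular values over the nested interlacing intervals and match the result to the $\aGUE_n$ density. But you stop short precisely at the two steps that carry the argument, and the machinery you speculate might be needed is not what makes it work. First, before any integration the paper establishes that the sign-sum
$D(\sigma)=\sum_{\epsilon}\theta_0(\epsilon)\,\Delta(\epsilon_1\sigma_1,\ldots,\epsilon_n\sigma_n)$
factors \emph{on the cone itself} (Theorem~\ref{thm:JointDensity}) into a Vandermonde in the squared odd-location variables times a Vandermonde in the squared even-location variables (times a linear factor in the $y$'s and a power of $2$). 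This is done by an elementary parity block-split of the rows and columns of the Vandermonde determinant: the sum over the sign flips of the odd-indexed columns forces the bottom block of those columns to zero (contributing $2^{\hat m}$), and what remains is a block-triangular determinant that literally splits as a product. You never carry out this factorization; you only note that a fixed sign pattern factors as a product of pairwise differences and sums, which by itself does not exhibit the chiral structure of the sum.

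Second, with the factored density in hand, the integration over the interlacing intervals is handled by a single determinantal integration-by-parts lemma (Lemma~\ref{lem:1}), not by a de~Bruijn Pfaffian identity nor by ``peeling off the outermost pair'': one writes $\Delta(z_m^2,\ldots,z_1^2)\prod z_k^a e^{-z_k^2/2}$ as the determinant $\det(e^{(m)}_a(z_m),\ldots,e^{(m)}_a(z_1))$ of columns of Gaussian moments, and observes that integrating each column over its interlacing interval turns $e_\kappa$ into $e_{\kappa-1}$ up to a fixed lower-triangular transformation, producing a bordered determinant of the same type. Iterating this once kills one row and collapses the integral exactly onto $g_\mu(s)$, i.e.\ onto the second factor of the factored $D$, so the marginal is $g_\mu(s)^2$ (up to a parity-dependent constant), which is the $\aGUE_n$ density. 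These two steps are elementary but genuinely nontrivial; your $n=2$ check does not probe either of them, and without them the ``huge sum collapses'' claim has no support. (For context: the paper also gives an independent, sharper proof in Section~\ref{section:newmodel}, not via integrating out the odd variables but by a diffeomorphism $(t,s)\mapsto(r,s)$ whose Jacobian cancels the odd Vandermonde and turns the conditional density into a product of $\chi$-densities; your proposal does not pursue that route.)
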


\noindent
We will give two proofs that differ in their handling of the
odd-location singular values: one (Section~\ref{section:newmodel}) by
algebraically transforming them to a set of {\em independent} positive
variables, each distributed as~$\chi_2$ and, if $n$ is odd, a surplus
$\chi_1$; the other (Section~\ref{section:evensingularvalues}) by
integrating them out. Both proofs are based on an algebraic
factorization (Section~\ref{section:jointdensity}) of the joint density
of $|\GOE_n|$, where one factor depends only on the even-location singular
values, the other on the odd-location ones. If we recall the
superposition representation, see \cite[Eq.~(2.6)]{EvenSymm} or
\cite[Thm.~1]{EL-GUEsing}, 
\[
|\GUE_n| \overset{\rm d}{=}  \aGUE_n\,\cup\; \aGUE_{n+1},
\]
of the singular values of the Gaussian
unitary ensemble (GUE),
with both ensembles on the right drawn independently,
Theorem~\ref{thm:main1} immediately implies the following remarkable
relation between the singular values of GUE and GOE:
\begin{corollary} With the ensembles on the right drawn independently, there holds
\begin{equation}\label{eqn:super}
|\GUE_n|  \overset{\rm d}{=}  \even |\GOE_n| \,\cup\, \even |\GOE_{n+1}|.
\end{equation}
\end{corollary}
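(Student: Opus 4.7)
The plan is to deduce the corollary directly from Theorem~\ref{thm:main1} together with the already-cited superposition identity
\[
|\GUE_n| \overset{\rm d}{=} \aGUE_n \,\cup\, \aGUE_{n+1},
\]
where the two ensembles on the right are drawn independently. Since the ensembles are independent, the joint distribution of the pair $(\aGUE_n, \aGUE_{n+1})$ is the product of the marginals, so we are free to replace each marginal with any distributionally equivalent ensemble without altering the distribution of the union.

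Concretely, I would first invoke Theorem~\ref{thm:main1} at order $n$ to obtain $\aGUE_n \overset{\rm d}{=} \even|\GOE_n|$, and then again at order $n+1$ to obtain $\aGUE_{n+1} \overset{\rm d}{=} \even|\GOE_{n+1}|$. Choosing independent realizations of $|\GOE_n|$ and $|\GOE_{n+1}|$ (so that $\even|\GOE_n|$ and $\even|\GOE_{n+1}|$ are independent), substitution on the right side of the superposition identity gives
\[
|\GUE_n| \overset{\rm d}{=} \even|\GOE_n| \,\cup\, \even|\GOE_{n+1}|,
\]
which is \eqref{eqn:super}.

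There is no real obstacle here; the only thing to be careful about is the independence bookkeeping, namely that one must couple the two $\GOE$ matrices to be independent so that the resulting even-decimated ensembles inherit the independence required on the right-hand side of the superposition formula. Once this is noted, the corollary follows in a single line from Theorem~\ref{thm:main1}.
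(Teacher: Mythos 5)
Your proposal is correct and coincides with the paper's argument: the corollary is stated as an immediate consequence of Theorem~\ref{thm:main1} applied at orders $n$ and $n+1$ to the cited superposition $|\GUE_n| \overset{\rm d}{=} \aGUE_n \cup \aGUE_{n+1}$, with independence of the two ensembles preserved exactly as you note.
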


\noindent
The superposition \eqref{eqn:super} bears a striking
similarity with a corresponding result for the eigenvalue
distributions, see \cite[Eq.~(5.9)]{FR} and \cite[Eq.~(6.14)]{FR2005}, namely
\[
\GUE_n \overset{\rm d}{=}  \even\left( \GOE_n \cup\, \GOE_{n+1}\right).
\]

Our \emph{second} set of main results
(Sections~\ref{section:newmodel}/\ref{section:determinant}) sharpens
Theorem~\ref{thm:main1} by realizing $|\GOE_n|$ as the singular values
of other matrix models that reveal a rich additional structure.  A first
model (Corollary~\ref{cor:newmodel-chi_n}), initially identified by
comparing moments of the product of the even singular values to known
moments of the determinant \cite[Eqs.~(23)
and~(24)]{Andrews-Goulden-Jackson}, is constructed by bordering the
skew-symmetric matrix $A$ defining the anti-GUE with an independent
standard normal vector $b\in\R^n$: that is to say, the singular values
of $G$ and those of $$H = (b \; A)$$ are both distributed as
$|\GOE_n|$. To the same end, the bordering vector could also be chosen
as $b=\tau_n e_1$, where $\tau_n$ is a $\chi_n$-distributed variable,
independent of $A$, and $e_1$ denotes the first unit vector. The
precise effect of such borderings on the singular values of a matrix
is studied in the preparatory Section~\ref{section:linearalgebra}.

Using a technique (Lemma~\ref{lem:square}) that was, in essence,
introduced by Dumitriu and Forrester \cite{Dumitriu}, this bordered
matrix model is finally
(Theorems~\ref{thm:square-bidiagonal-even}/\ref{thm:square-bidiagonal-odd})
transformed into a pair $(R^{\even}, R^{\odd})$ of \emph{bidiagonal}
square matrices, whose singular values are jointly distributed as
$\even|\GOE_n|$ and $\odd|\GOE_n|$.  Both matrices depend in a very
simple fashion on a set of independent $\chi_k$-distributed random
variables.  Specifically, for $n=2m$ even (the structure of the odd
order case is similar), we get
\begin{gather*}
R^{\odd}  = 
\frac{1}{\sqrt{2}}\begin{pmatrix}
    \sqrt{\xi_1^2 + 2\xi_{2m}^2} & \xi_{2m-2}  \\
    & \xi_{2m-1} & \xi_{2m-4} \\
    & &\ddots & \ddots  \\
    & & &\xi_5  & \xi_2 \\
    & & & & \xi_3
  \end{pmatrix}\\
\intertext{and}
R^{\even} = 
\frac{1}{\sqrt{2}}
  \begin{pmatrix}
    \xi_1 & \xi_{2m-2}  \\
    & \xi_{2m-1} & \xi_{2m-4} \\
    & &\ddots & \ddots  \\
    & & &\xi_5  & \xi_2 \\
    & & & & \xi_3
  \end{pmatrix},
  \end{gather*}
where $\xi_1,\xi_2,\ldots,\xi_{2m}$ are independent random variables, with $\xi_k$ distributed as $\chi_k$.
The singular values of $R^{\odd}$ correspond to $\odd |\GOE_{2m}|$ and those of $R^{\even}$ to $\even|\GOE_{2m}|$,
both drawn from the same ensemble.

As a striking application (Corollary~\ref{cor:detfact}) of this new
matrix pair model, we establish that $|\!\det \GOE_n|$ can be
expressed explicitly as a product of independent random
variables. Specifically, for $n=2m$ even, the determinant of
$M=\sqrt{2}G$ factors as
\begin{equation}\label{eq:detMeven}
|\!\det M| = \xi_1 \sqrt{\xi_1^2+2\xi_{2m}^2} \cdot \xi_3^2 \cdot \xi_5^2 \,\cdots\, \xi_{2m-1}^2,
\end{equation}
with independent variables $\xi_k$ distributed as $\chi_k$.
A similar factorization holds in the odd order case.
The form of these variables explains the absence of large prime
factors in the moments of the determinant, and leads
to a new, simple proof (Section~\ref{section:CLT}) of the known central limit theorem for
$\log|\!\det\GOE_n|$, cf. Delannay and Le~Ca\"er
\cite[Section~III]{Delannay-LeCaer}, Tao and Vu \cite[Thm.~4]{TaoVu}. While
 the representation \eqref{eq:detMeven} of $|\!\det M|$ as a product of independent random variables can
 be found implicitly in the work of Delannay and Le~Ca\"er, namely in form of a factorization \cite[Eq.~(41)]{Delannay-LeCaer}
of the Meijer $G$-function
representation of the Mellin transform of $|\!\det M|$ into hypergeometric terms, see the discussion of \eqref{eqn:fancy-mellin},
Tao and Vu, who approximated the log-determinant by a sum of weakly dependent terms, speculated that
such a representation would not be possible \cite[p.~78]{TaoVu}. 

Our \emph{third} set of main results (Section~\ref{section:gapprobabilities}) studies the implication
of Theorem~\ref{thm:main1} on the inter-relation of gap probabilities, that is, the
probabilities $E(k;J)$ that the interval $J$ contains exactly $k$
eigenvalues drawn from a random matrix ensemble. Specifically, for
order $n$, we get
\[
E^{n}_{\GOE}(2k+\mu-1;(-s,s)) + E_{\GOE}^{n}(2k+\mu;(-s,s)) = E_{\aGUE}^{n}(k;(0,s)),
\]
where $\mu=0,1$ denotes the parity of $n$. This formula was previously
known only in the case $\mu=0$, see Forrester
\cite[Eq.~(1.14)]{EvenSymm}.  
We initially used a heuristic argument, see \eqref{eqn:heuristic}, to
  extrapolate the formula to the case $\mu=1$.  A substantial portion of
  the present discussion was derived from attempts to justify the
  heuristic, after this prediction held up under numerical scrutiny. 
Taking the bulk scaling
limit of both cases provides a new, simpler proof of a remarkable
formula previously obtained by Mehta relating the gap probabilities of the
\GOE{} and those of the Laguerre unitary ensemble (\LUE{}), see \eqref{eqn:bulk}.

\paragraph{\bf Notation.} In contrast to the previous analyses
mentioned, where either ensembles of odd (e.g., if Pfaffians were
used) or of even order (e.g., if Mellin transforms were used) have
typically presented considerable technical complications, our
treatment of ensembles of even and odd order is nearly identical. The
formulae themselves, however, will often depend on the parity~$\mu$ of
the underlying order $n$ and we will, throughout this paper, write
\begin{subequations}\label{eqn:dim}
\begin{equation}
  n = 2m + \mu \quad (\mu=0,1),\qquad \hat{m}=m+\mu,
\end{equation}
that is,  
\begin{equation}
  m = \floor{n/2}, \qquad \hat{m} = \ceil{n/2}, 
  \qquad \mu = \ceil{n/2} - \floor{n/2}.
\end{equation}
\end{subequations}
Terms that only appear for $n$ odd will be written with a factor $\mu$
in a sum and with an exponent $\mu$ in a product; etc.  This way,
without suggesting any natural interpolation between the cases $\mu=0$
and $\mu=1$ (with the notable exception of the usage of the heuristic
duality principle \eqref{eqn:heuristic} that started our work), we
simply avoid writing out awkward case distinctions.


\section{Joint Density of the Singular Values}\label{section:jointdensity}

In this section we establish, in two different ways, the joint
probability distribution of the singular values $\sigma_j=\abs[
]{\lambda_j}$ of the \GOE{} induced by the corresponding density for
eigenvalues
as given by the symmetric function
\begin{equation}\label{eqn:goedensity}
  p(\lambda_1,\lambda_2,\dotsc,\lambda_n)=c_n\prod_{k=1}^n
  \mathrm{e}^{-\lambda_k^2/2}\cdot
  \abs{\Delta(\lambda_1,\lambda_2,\dotsc,\lambda_n)}
\end{equation}
with some normalization constant $c_n$ and the Vandermonde determinant
\[
\Delta(\xi_1,\ldots, \xi_n) = \det\begin{pmatrix}
1 & 1 & \cdots & 1 \\
\xi_1 & \xi_2 & \cdots & \xi_n\\
\vdots & \vdots & & \vdots\\
\xi_1^{n-1} & \xi_2^{n-1}& \cdots & \xi_n^{n-1} 
\end{pmatrix} = \prod_{k>j} (\xi_k-\xi_j).
\]
We will frequently use that $\Delta(\xi_1,\ldots, \xi_n)\geq0$ if the arguments are
increasingly ordered, $\xi_1 \leq \cdots \leq \xi_n$.

By symmetry, we can establish the joint density of the singular values
by restricting ourselves to the cone of increasingly ordered singular
values
\begin{equation}\label{eqn:cone}
0 \leq \sigma_1 \leq \dotsb \leq \sigma_n,
\end{equation}
this way parametrizing  $|\GOE_n|$.
To simplify notation and to avoid case distinctions between odd and
even order $n$ in later parts of the paper, we
introduce two further sets of coordinates for this cone.
Writing, as detailed in \eqref{eqn:dim},
$n = 2m + \mu$ and $\hat{m}=m+\mu$ with $\mu=0,1$,
the coordinates
\begin{subequations}\label{eqn:coord_xy}
\begin{equation}\label{eqn:coord_xya}
x_j = \sigma_{2j-1}\quad (j=1,\ldots,\hat{m}), \qquad y_j = \sigma_{2j} \quad (j=1,\ldots,m)
\end{equation}
satisfy the interlacing property
\begin{equation}\label{eqn:interlacing_xy}
0 \leq x_1 \leq y_1 \leq x_2 \leq y_2 \leq \dotsb \leq x_{\hat{m}} \leq y_{\hat{m}},
\end{equation}
\end{subequations}
formally adding the value $y_{m+1}=\infty$ if $\mu=1$. With
$x^\downarrow$ and $y^\downarrow$ denoting the $x$ and $y$ vectors
with their components taken in the reverse order, so
$x^\downarrow=(x_{\hat{m}},x_{\hat{m}-1},\dotsc,x_1)$ and
$y^\downarrow=(y_m,y_{m-1},\dotsc,y_1)$, we define, depending on the parity
of $n$, the coordinates
\begin{subequations}\label{eqn:coord_st}
\begin{equation} \label{eqn:coord_st_a}
(t,s)=(y^\downarrow,x^\downarrow) \quad (\mu=0),\qquad (t,s)=(x^\downarrow,y^\downarrow) \quad (\mu=1),
\end{equation}
satisfying the interlacing property
\begin{equation}\label{eqn:interlacing}
t_1 \geq s_1 \geq t_2 \geq s_2 \geq \cdots \geq t_{\hat m} \geq s_{\hat m} \geq 0,
\end{equation}
\end{subequations}
again formally adding the value $s_{m+1}=0$ if $\mu=1$.   
A large part of the apparent dependence on parity
  is the fact that some results, like Theorem~\ref{thm:JointDensity},
  have stable expressions in terms of the $(x,y)$ coordinates, while
  others, like Theorem~\ref{thm:independent}, are stable in the
  $(t,s)$ coordinates.
Since the
mapping from $\sigma=(\sigma_1,\ldots,\sigma_n)$ to either the pair of
coordinates $(x,y)$ or $(t,s)$ is orthogonal, transforming the density
between the three sets of coordinates is simply done by inserting new
variable names for old ones. Note that the $s$ variables parametrize the even-location decimated ensemble $\even |\GOE_n|$
while the $t$-variables do the same for $\odd|\GOE_n|$. We call them the even and odd singular values.

Supported on the cone defined by \eqref{eqn:cone}, the joint probability density
of the singular values is 
\[
q(\sigma_1,\ldots,\sigma_n) = n! \sum_{\epsilon \in \{\pm 1\}^n}
p(\epsilon_1 \sigma_1,\ldots,\epsilon_n \sigma_n) = c_n n!
\prod_{k=1}^n e^{-\sigma_k^2/2} D(\sigma_1,\ldots,\sigma_n)
\]
with 
\[
 D(\sigma_1,\ldots,\sigma_n) = \sum_{\epsilon \in \{\pm 1\}^n}
 |\Delta(\epsilon_1 \sigma_1,\ldots, \epsilon_n \sigma_n)|.
\]
To determine the signs of the Vandermonde terms it suffices to discuss the case $\sigma_1 < \cdots < \sigma_n$:
we then get, because $\sign ( \epsilon_k \sigma_k - \epsilon_j \sigma_j )=
\epsilon_k$ if $k>j$, 
\[
\sign \Delta(\epsilon_1 \sigma_1,\ldots, \epsilon_n \sigma_n) =
\prod_{k>j} \epsilon_k = \prod_{k=2}^n \epsilon_k^{k-1} =
\prod_{\text{$k$ even}} \epsilon_k.
\]
Hence, by continuity, there holds on all of (\ref{eqn:cone})
\begin{equation}\label{eqn:D_without_abs}
D(\sigma_1,\ldots,\sigma_n) = \sum_{\epsilon \in \{\pm 1\}^n}
\theta_0(\epsilon)\,\Delta(\epsilon_1 \sigma_1,\ldots, \epsilon_n
\sigma_n),\qquad \theta_0(\epsilon) = \prod_{\text{$k$ even}}
\epsilon_k.
\end{equation}
The form of $\theta_0(\epsilon)$ suggests we proceed in
terms of the $(x,y)$ coordinates introduced in (\ref{eqn:coord_xy}).
 With respect to these coordinates,
we obtain the following theorem.

\begin{theorem}\label{thm:JointDensity} 
The joint probability density of $|\GOE_n|$,
supported on the cone~\eqref{eqn:cone} and expressed in the coordinates
\eqref{eqn:coord_xy}, is given by
\begin{equation}\label{eqn:JointDensity}
c_n n! 2^n \cdot \left( \prod_{k=1}^{\hat{m}} e^{-x_k^2/2} \cdot
\Delta(x_1^2,\ldots,x_{\hat{m}}^2) \right)\cdot \left( \prod_{k=1}^m
y_k e^{-y_k^2/2} \cdot \Delta(y_1^2,\ldots,y_{m}^2)\right),
\end{equation}
where $c_n$ is the normalization constant of the \GOE-density
\eqref{eqn:goedensity}.
\end{theorem}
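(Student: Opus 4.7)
The plan is to reduce the theorem to an explicit evaluation of the symmetrized Vandermonde sum $D(\sigma)$ from \eqref{eqn:D_without_abs} in the $(x,y)$ coordinates; once $D$ is computed, combining it with the Gaussian weight (which factors as $\prod_k e^{-x_k^2/2}\cdot\prod_k e^{-y_k^2/2}$) and the prefactor $c_n n!$ immediately produces \eqref{eqn:JointDensity}. The key tool is multilinearity of the determinant, applied column by column.

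First I would write $\Delta(\epsilon_1 \sigma_1,\ldots,\epsilon_n \sigma_n) = \det M(\epsilon)$ with $M_{ij}(\epsilon) = (\epsilon_j \sigma_j)^{i-1}$ and observe that $\theta_0(\epsilon)=\prod_{k\text{ even}}\epsilon_k$ contributes one factor $\epsilon_k$ to each even-indexed column (the $y$-columns) and none to the odd-indexed columns (the $x$-columns). Multilinearity lets the sum over $\epsilon$ be carried column-by-column. An odd column $k=2j-1$ contracts to
\[
\sum_{\epsilon_k=\pm1}(\epsilon_k x_j)^{i-1}=\begin{cases} 2\,x_j^{i-1}, & i \text{ odd,}\\ 0, & i \text{ even,}\end{cases}
\]
while an even column $k=2j$ has its surviving parity flipped by the extra $\epsilon_k$,
\[
\sum_{\epsilon_k=\pm1}\epsilon_k\,(\epsilon_k y_j)^{i-1}=\begin{cases} 2\,y_j^{i-1}, & i \text{ even,}\\ 0, & i \text{ odd.}\end{cases}
\]
Hence $D(\sigma)=\det\tilde M$, where $\tilde M_{ik}$ vanishes whenever $i$ and $k$ have different parities.

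Next I would apply the same perfect-shuffle permutation to the rows and columns of $\tilde M$ (odd indices first, even indices last); the two accompanying sign changes cancel, so the determinant is unaffected. The rearranged matrix is block-diagonal with an $\hat{m}\times\hat{m}$ block of entries $2\,(x_j^2)^{\ell-1}$ and an $m\times m$ block of entries $2\,y_j\,(y_j^2)^{\ell-1}$, whose determinants are $2^{\hat{m}}\,\Delta(x_1^2,\ldots,x_{\hat{m}}^2)$ and $2^m\,\prod_k y_k\cdot\Delta(y_1^2,\ldots,y_m^2)$ respectively. Since $\hat{m}+m=n$, this yields $D(\sigma)=2^n\,\Delta(x_1^2,\ldots,x_{\hat{m}}^2)\cdot\prod_k y_k\cdot\Delta(y_1^2,\ldots,y_m^2)$, and inserting this into $q(\sigma)=c_n n!\prod_k e^{-\sigma_k^2/2}\,D(\sigma)$ produces \eqref{eqn:JointDensity}. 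I expect no real difficulty beyond careful bookkeeping: checking that the two simultaneous reorderings produce cancelling signs, identifying each block as a Vandermonde in squared variables, and tracking the extra $\prod_k y_k$ that arises because the $y$-block has entries of odd total degree in $y_j$. The argument runs uniformly in the parity $\mu$ of $n$, since the block sizes $\hat{m}$ and $m$ absorb the case distinction automatically.
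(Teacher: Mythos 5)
Your proof is correct and is essentially the same as the paper's first proof ("Proof by Determinantal Structure"): both exploit parity to zero out half the entries and a simultaneous odd/even shuffle of rows and columns to expose the block structure. You merely perform the sign-summation (by multilinearity) before the permutation rather than after, which is an inessential reordering of the same two steps.
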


\begin{remark}
  Despite the fact that the joint density factors on its domain of
  support, it does \emph{not} reveal an independence between the
  underlying variables $x$ and $y$. Their dependence is entirely by
  the interlacing (\ref{eqn:interlacing_xy}).
\end{remark}

We will give two different proofs of the theorem.  The first uses the
determinantal structure of the Vandermonde terms to establish the
factorization, while the second uses their polynomial structure and their symmetries.  It
is our consideration that the first proof is more straightforward,
while the second provides additional insight into the structure of the
factorization.


\subsection*{Proof by Determinantal Structure}\label{sec:densityfirstproof}

We write $D(x;y)$ for (\ref{eqn:D_without_abs}) when expressed in terms of the $(x,y)$ variables (\ref{eqn:coord_xy}); 
it is convenient to split the sign changes $\epsilon$ into $\epsilon^x$ and $\epsilon^y$
accordingly and to use 
\[
\theta_0(\epsilon) = \theta(\epsilon^y), \qquad 
\theta(\epsilon^y) = \epsilon^y_1\cdots\epsilon_m^y.
\] 

Since determinants are invariant with respect to
a simultaneous row and column permutation so that the odd columns and rows occur before the even ones, we
express the Vandermonde terms as
\[
\Delta(\epsilon\sigma_1,\ldots,\epsilon\sigma_n) = 
\det
\begin{pmatrix}
\pi^{(\hat{m})}_0(x_1) & \ldots & \pi^{(\hat{m})}_0(x_{\hat{m}}) & \pi^{(\hat{m})}_0(y_1) & \ldots & \pi^{(\hat{m})}_0(y_m) \\*[2mm]
\epsilon^x_1\pi^{(m)}_1(x_1) & \ldots & \epsilon^x_{\hat{m}}\pi^{(m)}_1(x_{\hat{m}}) & 
\epsilon^y_1\pi^{(m)}_1(y_1) & \ldots & \epsilon^y_{m}\pi^{(m)}_1(y_m)
\end{pmatrix}
\]
by writing the determinant column-wise with
\[
\pi_\mu^{(n)}(x) = \begin{pmatrix}
x^{\mu} \\
x^{\mu+2}\\
\vdots\\
x^{\mu+2n-2}
\end{pmatrix} \in \R^n\qquad (\mu=0,1).
\]
Now, we calculate
\begin{multline*}
 D(x;y)
 =\\*[2mm]  
\sum_{\substack{\epsilon^x \in\{\pm1\}^{\hat{m}}\\*[1mm]\epsilon^y \in\{\pm1\}^m }} \theta(\epsilon^y) 
 \det
\begin{pmatrix}
\pi^{(\hat{m})}_0(x_1) & \ldots & \pi^{(\hat{m})}_0(x_{\hat{m}}) & \pi^{(\hat{m})}_0(y_1) & \ldots & \pi^{(\hat{m})}_0(y_m) \\*[2mm]
\epsilon^x_1\pi^{(m)}_1(x_1) & \ldots & \epsilon^x_{\hat{m}}\pi^{(m)}_1(x_{\hat{m}}) & 
\epsilon^y_1\pi^{(m)}_1(y_1) & \ldots & \epsilon^y_{m}\pi^{(m)}_1(y_m)
\end{pmatrix}\\*[2mm]
 =   \sum_{\epsilon^y \in\{\pm1\}^m} \theta(\epsilon^y)
\det
\begin{pmatrix}
2\pi^{(\hat{m})}_0(x_1) & \ldots & 2\pi^{(\hat{m})}_0(x_{\hat{m}}) & \pi^{(\hat{m})}_0(y_1) & \ldots & \pi^{(\hat{m})}_0(y_m) \\*[2mm]
0 & \ldots & 0 & 
\epsilon^y_1\pi^{(m)}_1(y_1) & \ldots & \epsilon^y_{m}\pi^{(m)}_1(y_m)
\end{pmatrix}\\*[4mm]
=2^{\hat{m}} 
\det
\begin{pmatrix}
\pi^{(\hat{m})}_0(x_1) & \ldots & \pi^{(\hat{m})}_0(x_{\hat{m}})
\end{pmatrix}
\cdot
\sum_{\epsilon^y \in\{\pm1\}^m} \theta(\epsilon^y)^2
\det
\begin{pmatrix} 
\pi^{(m)}_1(y_1) & \ldots & \pi^{(m)}_1(y_m)
\end{pmatrix}\\*[2mm]
=2^{\hat{m}} 
\det
\begin{pmatrix}
\pi^{(\hat{m})}_0(x_1) & \ldots & \pi^{(\hat{m})}_0(x_{\hat{m}})
\end{pmatrix}
\cdot 2^{m} 
\det
\begin{pmatrix} 
\pi^{(m)}_1(y_1) & \ldots & \pi^{(m)}_1(y_m)
\end{pmatrix}.
\end{multline*}
By noting $\hat{m} + m = n$ and by expressing the result in terms of Vandermonde determinants,  we finally get
\begin{equation}\label{eqn:Dxy}
D(x;y) = 2^n \cdot \Delta(x_1^2,\ldots, x_{\hat{m}}^2) \cdot y_1\cdots y_m\; \Delta(y_1^2,\ldots,y_{m}^2).
\end{equation}
This factorization establishes Theorem~\ref{thm:JointDensity}.


\subsection*{Proof by Polynomiality}\label{sec:densitysecondproof}

We give a second proof of the factorization (\ref{eqn:Dxy}) based on
the observation that the sum in (\ref{eqn:D_without_abs})
defines a polynomial of homogeneous degree at most $\binom{n}{2}$.  We identify
the factors by symmetrizing known vanishings of this polynomial.
Extending the definition of $D$ polynomially to all real values of its
arguments, we get in particular
\begin{equation}\label{eqn:D_signed}
D(\epsilon_1\sigma_1,\ldots,\epsilon_n\sigma_n) = \theta_0(\epsilon)
D(\sigma_1,\ldots,\sigma_n)\qquad (\epsilon \in \{\pm 1\}^n)
\end{equation}
and, inherited from the Vandermonde terms, $D$ is \emph{antisymmetric}
with respect to permutations of either the even or odd indices of
$\sigma$ since both sets of permutations leave the factor
$\theta_0(\epsilon)$ invariant.

Now, if $\sigma_j = \sigma_{j+2}$ for $\sigma$ belonging to the cone
(\ref{eqn:cone}), we have $\sigma_j=\sigma_{j+1}=\sigma_{j+2}$ and,
hence, by the pigeonhole principle, for each choice of signs
$\epsilon$ at least one of
\[
\epsilon_j \sigma_j = \epsilon_{j+1} \sigma_{j+1} \quad\text{or}\quad\epsilon_j \sigma_j = \epsilon_{j+2} \sigma_{j+2} \quad\text{or}\quad\epsilon_{j+1} \sigma_{j+1} = \epsilon_{j+2} \sigma_{j+2}  
\]
holds. Therefore, each of the Vandermonde terms in
(\ref{eqn:D_without_abs}) vanishes. It follows that $\sigma_j -
\sigma_{j+2}$ and by~(\ref{eqn:D_signed}) also $\sigma_j +
\sigma_{j+2}$ divide $D$ for all $j$, thus so does the product
$\sigma_j^2-\sigma_{j+2}^2$. We also note that if $\sigma_2 = 0$, we
have $\sigma_1=\sigma_2=0$ and, hence, for each choice of signs
$\epsilon_1 \sigma_1 = \epsilon_2 \sigma_2$.  Once more each of the
Vandermonde terms in (\ref{eqn:D_without_abs}) vanishes and it follows
that $\sigma_2$ divides $D$.

In terms of the $(x,y)$-coordinates 
(\ref{eqn:coord_xya}) we thus see that $x_j^2-x_{j+1}^2$,
$y_j^2-y_{j+1}^2$ and $y_1$ divide $D(x;y)$. Invoking the antisymmetry
with respect to either $x$ or $y$, we see that $D$ is divisible by
$x_j^2-x_k^2$, $y_j^2-y_k^2$ for every $j\neq k$ and by $y_j$ for
every $j$.  These factors contribute homogeneous degree
$$2\binom{\hat{m}}{2}+2\binom{m}{2}+m=\binom{n}{2},$$ so $D$ cannot have
any other non-unit factors and we get
\begin{equation}\label{eqn:Dxy_dn}
D(x;y) = d_n \cdot \Delta(x_1^2,\ldots, x_{\hat{m}}^2) \cdot y_1\cdots y_m\; \Delta(y_1^2,\ldots,y_{m}^2)
\end{equation}
with a positive constant $d_n$. This is (\ref{eqn:Dxy}) except for identifying $d_n=2^n$.

\begin{remark} The value of $d_n$ can easily be calculated without resorting to the first proof: 
a straightforward inspection shows that the expressions (\ref{eqn:D_without_abs}) and (\ref{eqn:Dxy_dn}) both induce an asymptotics of the form
\[
D(\sigma_1,\ldots,\sigma_{n-1},\sigma_n) \sim \kappa_n \sigma_n^{n-1} D(\sigma_1,\ldots,\sigma_{n-1})\qquad (\sigma_n \to \infty);
\]
the first one gives $\kappa_n = 2$, the second one $\kappa_n = d_n/d_{n-1}$. From $D(\sigma_1) = 2$ we thus get $d_n = 2^n$. 
\end{remark}



\section{Singular Values of Bordered (Skew-)Symmetric Matrices}\label{section:linearalgebra}

In preparation for what follows, in this section we study an algebraic
device that allows us to untangle the interlacing
\eqref{eqn:interlacing} of even and odd singular values, namely
bordering a \emph{skew-symmetric} matrix $A \in \R^{n \times n}$ with a column vector: $A \mapsto (b\;\; A)$.  By looking at the
purely imaginary Hermitian matrix $i A$ we see that each non-zero
singular value of $A$ occurs with even multiplicity.
That is, with $n=2m+\mu$, 
the $n$ singular values of $A$ can be arranged as the
sequence $s_1,s_1,s_2,s_2,\ldots,s_m,s_m$ and, if $\mu=1$, also
$s_{m+1}=0$, decreasingly ordered according to
\[
s_1 \geq s_2 \geq \cdots \geq s_{\hat m},\qquad \hat m = m + \mu.
\]
The results of this section are twofold. First, Lemma \ref{lem:2:1}
shows that the singular values of $(b\;\; A)$ are of the form (with
the value $s_{\hat m}=0$ only formally added to the list of
inequalities if $\mu=1$)
\begin{equation}
t_1 \geq s_1 \geq t_2 \geq s_2 \geq \cdots \geq t_{\hat m} \geq s_{\hat m}.\tag{\ref{eqn:interlacing}}
\end{equation}
That is, bordering $A$ by a column $b$ splits the double listed pairs $(s_j,s_j)$ of singular values into $(s_j,t_j)$ and, if $\mu=1$,
modifies the surplus singular value $s_{\hat m}=0$ into some $t_{\hat m}$ subject to the interlacing~(\ref{eqn:interlacing}).
They are {\em strictly interlacing}  if
\[
t_1 > s_1 > t_2 > s_2 > \cdots > t_{\hat m} > s_{\hat m}.
\]
Second, Lemma \ref{lem:2:2} establishes a coordinate change $(t,s) \mapsto (r,s)$
through an explicit algebraic map
such that strict interlacing of $t$ with $s$ corresponds to strict positivity of the components of $r$.

To begin with, there are orthogonal matrices
$U$ and $V$ such that (the last row and column of the block partitioning are understood to be $\mu$-dimensional, meaning
that they are missing if $\mu=0$)
\begin{equation}\label{eqn:SVD_A}
U A V' = 
\begin{pmatrix}
S &0 &0 \\
0 &S &0 \\
0 &0 &0
\end{pmatrix},\qquad
U (b\;\; A) \begin{pmatrix}
1 & 0 \\
0 & V'
\end{pmatrix}
= 
\begin{pmatrix}
u & S &0 &0 \\
v & 0 &S &0 \\
\eta &0 &0 & 0
\end{pmatrix},
\end{equation}
with $S = \diag(s_1,\ldots,s_m)$ built from the singular values of $A$ and the partitioning 
\[
Ub =
\begin{pmatrix}
u\\
v\\
\eta
\end{pmatrix},\qquad u,v \in \R^m,\quad \eta \in \R^\mu.
\]
Hence, the singular values of $(b\;\; A)$ are given by the following lemma.

\begin{lemma}\label{lem:2:1} Let $S = \diag(s_1,\ldots,s_m)$ be a diagonal matrix and $u, v \in \R^m$, $\eta \in \R^\mu$ $(\mu=0,1)$. 
Then, with $\hat m = m + \mu$, the singular values of the $(m+\hat m)\times(m+\hat m +1)$ block matrix
\[
\begin{pmatrix}
u &S &0 &0 \\
v &0 &S &0 \\
\eta &0 &0 &0
\end{pmatrix}
\]
are $s_1,\ldots,s_m,t_1,\ldots,t_{\hat m}$, satisfying the interlacing property \eqref{eqn:interlacing}. Here, the $t_j$ are the singular~values of the $\hat m\times (\hat m +1)$ bordered matrix $(r\;\, \hat S)$ with 
$\hat S = \diag (s_1,\ldots,s_{\hat m})$ and $r_j = \sqrt{u_j^2 + v_j^2}$ ($j=1,\ldots,m$);
if $\mu=1$, then $r_{m+1} = |\eta|$ and $s_{m+1}= 0$.
Further, there holds
\begin{equation}\label{eqn:t_sum_squared}
t_1^2 +\cdots + t_{\hat{m}}^2 = r_1^2 +\cdots + r_{\hat{m}}^2 + s_1^2 + \cdots + s_m^2
\end{equation}
and, if $\mu=1$,
\begin{equation}\label{eqn:t_prod}
t_1 \cdots \,t_{\hat{m}} = r_{\hat{m}} \cdot s_1 \cdots \,s_m.
\end{equation}
\end{lemma}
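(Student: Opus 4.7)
The plan is to reduce the bordered matrix to a block-decoupled form via left and right orthogonal operations (which preserve singular values), isolating an $S$-block that contributes the singular values $s_1,\ldots,s_m$ and leaving the bordered matrix $(r\;\hat S)$ on the remaining rows and columns. For each $j=1,\ldots,m$ I would apply a coordinated pair of Givens rotations acting on disjoint rows and columns: a left $2\times 2$ rotation on rows $j$ and $m+j$ that sends $(u_j,v_j)^T$ to $(r_j,0)^T$ with $r_j=\sqrt{u_j^2+v_j^2}$, followed by a right $2\times 2$ rotation on columns $j+1$ and $m+j+1$ (the two columns containing~$s_j$) that cleans up the $s_j$-entries which get mixed by the left rotation. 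The fact making this work is that, after the left rotation, the two 2-vectors appearing in rows $j$ and $m+j$ within columns $j+1$ and $m+j+1$ are mutually orthogonal and each of norm~$s_j$, so a single right rotation simultaneously returns one to $(s_j,0)$ and the other to $(0,s_j)$.

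After performing these $m$ independent paired rotations, the second copy of $S$ (in rows $m+1,\ldots,2m$ and columns $m+2,\ldots,2m+1$) becomes isolated, contributing exactly the singular values $s_1,\ldots,s_m$, while what remains on the complementary rows and columns is precisely the matrix $(r\;\hat S)$, with $r_j=\sqrt{u_j^2+v_j^2}$ for $j\leq m$ and, in the case $\mu=1$, an untouched bottom row supplying $r_{m+1}=|\eta|$ together with a formal $s_{m+1}=0$ (the sign of $\eta$ absorbed by a diagonal sign change). The remaining $\hat m$ singular values of the original matrix are therefore the singular values $t_1,\ldots,t_{\hat m}$ of $(r\;\hat S)$.

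The three remaining assertions then follow quickly from properties of $(r\;\hat S)$. The $t_j^2$ are the eigenvalues of $(r\;\hat S)(r\;\hat S)^T=rr^T+\hat S^2$, a rank-one update of the diagonal matrix $\hat S^2$, so Cauchy's interlacing theorem gives $t_1^2\geq s_1^2\geq t_2^2\geq\cdots\geq t_{\hat m}^2\geq s_{\hat m}^2$; taking square roots yields \eqref{eqn:interlacing}. Taking the trace of $rr^T+\hat S^2$ yields \eqref{eqn:t_sum_squared}. For $\mu=1$, dropping the zero column of $(r\;\hat S)$ coming from $s_{m+1}=0$ produces a square $\hat m\times \hat m$ matrix whose last row contains only the entry $r_{\hat m}=|\eta|$; cofactor expansion along that row reveals a determinant of absolute value $r_{\hat m}\,s_1\cdots s_m$, giving \eqref{eqn:t_prod}.

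The main technical obstacle is only notational: one must verify that the paired rotations for distinct $j$ genuinely act on disjoint pairs of rows and columns (so that they commute and do not interfere) and that the right rotation on columns $j+1$ and $m+j+1$ does not disturb the entries of other rows in these columns. Both points follow at once from the observation that, before the rotations, all such extraneous entries are zero.
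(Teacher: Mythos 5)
Your proof is correct and takes essentially the same route as the paper: paired Givens rotations acting on rows $j,\,m+j$ together with right rotations on columns $j+1,\,m+j+1$ reduce the bordered matrix to a block-decoupled form isolating one copy of $S$ alongside $(r\;\hat S)$, after which the interlacing, sum-of-squares, and determinant identities are read off from $(r\;\hat S)$, exactly as in the paper's proof (which sends $(u_j,v_j)\mapsto(0,r_j)$ instead of $(r_j,0)$, isolating the other copy of $S$). One small point of attribution: what you use for $\hat S^2 + rr^T$ is the interlacing inequality for a rank-one positive semidefinite perturbation (Weyl), not Cauchy's interlacing theorem; Cauchy interlacing in the strict sense would instead apply to the transposed Gram matrix $(r\;\hat S)^T(r\;\hat S)$ by deleting its first row and column, and the paper avoids the issue by citing directly the interlacing of singular values under bordering by a column.
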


\begin{proof} It suffices to prove that the two matrices
\[
M_1 = \begin{pmatrix}
u &S &0 &0 \\
v &0 &S &0 \\
\eta &0 &0 &0
\end{pmatrix}
, \qquad
M_2 = \begin{pmatrix}
0 &S &0 &0 \\
r &0 &S &0 \\
|\eta| &0 &0 &0
\end{pmatrix}
\]
with $r_j = \sqrt{u_j^2 + v_j^2}$, $j=1,\ldots,m$, have the same singular values. Using a Givens rotation $U_j$ with
\[
U_j 
\begin{pmatrix}
u_j \\
v_j
\end{pmatrix} =
\begin{pmatrix}
0 \\
r_j
\end{pmatrix} \qquad (j=1,\ldots,m)
\]
one gets
\[
U_j 
\begin{pmatrix}
u_j & s_j & 0\\
v_j & 0 & s_j
\end{pmatrix} 
\begin{pmatrix}
1 & 0 \\
0 & U_j'
\end{pmatrix} = \begin{pmatrix}
0 & s_j & 0\\
r_j & 0 & s_j
\end{pmatrix}.
\]
Hence, by successively applying these two-dimensional orthogonal operations to the corresponding rows and columns (and addressing
a possible sign change of the last row if $\mu=1$) one transforms $M_1$ into $M_2$ while leaving the singular values invariant.

We note that the decreasingly ordered singular values $s_1,\ldots,s_{\hat m}$ of a matrix $\hat S$ and those
of the bordered matrix $(r\;\; \hat S)$, $t_1,\ldots,t_{\hat m}$,  
are generally known \cite[Cor.~7.3.6]{HornJohnson} to be {\em interlacing} as in \eqref{eqn:interlacing}.

To finish, \eqref{eqn:t_sum_squared} follows from expressing the Frobenius norm
of $(r\;\,\hat{S})$ in terms of its singular values~$t$ and \eqref{eqn:t_prod} follows from doing the same, if $\mu=1$, for
the magnitude of the determinant of that matrix with the last column (which is all zeros then) deleted.
\end{proof}

The next lemma shows that one can uniquely solve the inverse problem $t \mapsto r$ for strict interlacing.

\begin{lemma}\label{lem:2:2} With the notation as in Lemma~\ref{lem:2:1}, let $s_1 > s_2 > \cdots > s_{\hat m} \geq 0$ with $s_{\hat m} = 0$ if $\mu=1$, and let $\hat S = \diag(s_1,\ldots,s_{\hat m})$. Then,  the map
\[
\Phi: r \mapsto \text{$t =$ the decreasingly ordered singular values of $(r\;\; \hat S)$}
\]
defines a diffeomorphism 
\[
\Phi : \R^{\hat m}_{>0} \to \left\{ t \in \R^{\hat m}_{>0} : \text{$t$ is strictly interlacing with $s$}\right\}.
\]
If $t$ is strictly interlacing with~$s$, its preimage $r = \Phi^{-1}(t)$ is the unique positive
solution of the system
\begin{equation}\label{eqn:secular}
\sum_{k=1}^{\hat m} \frac{r_k^2}{t_j^2 -s_k^2} = 1\qquad (j=1,\ldots,\hat m),
\end{equation}
which is explicitly solved by
\begin{equation}\label{eqn:secular_solution}
r_j^2 = - \frac{\omega_t(s_j^2)}{\omega_s'(s_j^2)}, \qquad \omega_s(\xi) = (\xi-s_1^2)\cdots (\xi-s_{\hat m}^2), \quad \omega_t(\xi) = (\xi-t_1^2)\cdots (\xi-t_{\hat m}^2).
\end{equation}
The Jacobian of the inverse map $\Phi^{-1}$ is given by
\begin{equation}\label{eqn:jacobian}
\det \left(\frac{\partial r_j}{\partial t_k}\right)_{1\leq j,k\leq \hat m}
=  \frac{1}{r_1\cdots r_m} \cdot  
\frac{(t_1  \cdots t_{\hat m})^{1-\mu}\;\Delta(t_{\hat m}^2,\ldots,t_1^2)}
{(s_1  \cdots s_m)^{\mu}\;\Delta(s_m^2,\ldots,s_1^2)} \qquad (\mu=0,1).
\end{equation} 
\end{lemma}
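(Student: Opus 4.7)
The plan is to reduce everything to the classical theory of a rank-one perturbation of a diagonal matrix. Since the squared singular values of $(r\;\hat S)$ are the eigenvalues of
\[
(r\;\hat S)(r\;\hat S)^{T} = D + rr^{T}, \qquad D = \diag(s_1^2,\ldots,s_{\hat m}^2),
\]
the matrix determinant lemma applied to $\det(\lambda I - D - rr^{T})$ yields the polynomial identity
\[
\omega_t(\lambda) = \omega_s(\lambda) - \sum_{k=1}^{\hat m} r_k^2 \prod_{\ell\neq k}(\lambda - s_\ell^2).
\]
Specialization at $\lambda=t_j^2$ gives the secular equation \eqref{eqn:secular}, while specialization at $\lambda=s_j^2$ annihilates every term in the sum except $k=j$, producing $\omega_t(s_j^2) = -r_j^2\,\omega_s'(s_j^2)$, which is precisely \eqref{eqn:secular_solution}.

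For the direction $t\mapsto r$, I would argue by sign-counting. Under strict interlacing, $\omega_t(s_j^2)=\prod_i(s_j^2-t_i^2)$ has exactly $j$ negative factors (those with $i\le j$, since then $t_i>s_j$) and $\hat m-j$ positive ones, while $\omega_s'(s_j^2)$ has $j-1$ negative factors, so \eqref{eqn:secular_solution} produces $r_j^2>0$ with a unique positive square root. For the direction $r\mapsto t$: whenever all $r_k>0$, the rational function $\lambda\mapsto \sum_k r_k^2/(\lambda-s_k^2)$ is strictly decreasing from $+\infty$ to $-\infty$ on each interval $(s_{k+1}^2,s_k^2)$ and on $(s_1^2,\infty)$, so the secular equation has exactly one root in each such interval and $\Phi(r)$ is automatically strictly interlacing with $s$. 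Having two explicit smooth formulas that are inverse to each other then makes $\Phi$ a diffeomorphism onto the claimed range.

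For the Jacobian, differentiating $r_j^2=-\omega_t(s_j^2)/\omega_s'(s_j^2)$ and using $\omega_t(s_j^2)/(s_j^2-t_k^2)=\prod_{i\neq k}(s_j^2-t_i^2)$ gives
\[
\frac{\partial r_j}{\partial t_k} = \frac{t_k\,\omega_t(s_j^2)}{r_j\,\omega_s'(s_j^2)\,(s_j^2-t_k^2)}.
\]
Pulling the column factor $t_k$ and the row factor $\omega_t(s_j^2)/(r_j\omega_s'(s_j^2))$ out of the determinant reduces the calculation to the Cauchy determinant of $\bigl(1/(s_j^2-t_k^2)\bigr)$. Combining its closed form with the identities $\prod_j\omega_t(s_j^2)=\prod_{j,k}(s_j^2-t_k^2)$ and $\prod_j\omega_s'(s_j^2)=(-1)^{\binom{\hat m}{2}}\Delta(s_{\hat m}^2,\ldots,s_1^2)^{2}$ makes all sign factors cancel, leaving
\[
\det\!\Bigl(\frac{\partial r_j}{\partial t_k}\Bigr) = \frac{t_1\cdots t_{\hat m}}{r_1\cdots r_{\hat m}}\cdot\frac{\Delta(t_{\hat m}^2,\ldots,t_1^2)}{\Delta(s_{\hat m}^2,\ldots,s_1^2)}.
\]

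The main obstacle will be the parity bookkeeping needed to bring this expression into the asserted form \eqref{eqn:jacobian}. For $\mu=0$ it already matches. For $\mu=1$ one must extract the root $s_{\hat m}^2=0$ from $\Delta(s_{\hat m}^2,\ldots,s_1^2)$, producing a factor $(s_1\cdots s_m)^2$, and then invoke \eqref{eqn:t_prod} in the form $r_{\hat m}=t_1\cdots t_{\hat m}/(s_1\cdots s_m)$ to convert $r_1\cdots r_{\hat m}$ into $r_1\cdots r_m$; the two adjustments recombine to exactly the quoted factor $(t_1\cdots t_{\hat m})^{1-\mu}/(s_1\cdots s_m)^{\mu}$.
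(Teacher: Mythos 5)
Your proof is correct, and the Jacobian computation at the end is essentially the same determinant factorization the paper carries out (pull out diagonal factors, reduce to a Cauchy determinant, simplify, then handle the $\mu=1$ parity via \eqref{eqn:t_prod} and the extraction of the root $s_{\hat m}^2=0$ from $\Delta(s_{\hat m}^2,\ldots,s_1^2)$). Where you genuinely diverge is in the first half: the paper invokes three external references---Thompson for solvability of the inverse eigenvalue problem, Golub for the interlacing and secular equation of a rank-one update, and Schechter for the closed form \eqref{eqn:secular_solution}---whereas you derive everything in a self-contained way from a single polynomial identity. The matrix determinant lemma gives $\omega_t(\lambda)=\omega_s(\lambda)-\sum_k r_k^2\prod_{\ell\neq k}(\lambda-s_\ell^2)$, and specialization at $\lambda=t_j^2$ resp.\ $\lambda=s_j^2$ produces the secular equation and the explicit solution simultaneously; the sign count on $\omega_t(s_j^2)$ and $\omega_s'(s_j^2)$ gives positivity, and the monotone rational-function argument gives existence of exactly one root per gap (note a small slip: on $(s_1^2,\infty)$ the function decreases to $0$, not $-\infty$, but it still crosses $1$ exactly once). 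The trade-off is transparency versus length: the paper's route is shorter on the page but opaque without the cited sources, while yours is fully elementary and makes the two specializations of a single identity do all the work. One point you leave implicit that deserves a sentence is why $\Phi\circ\Phi^{-1}=\mathrm{id}$: given strictly interlacing $t$ and the $r$ produced by \eqref{eqn:secular_solution}, the two monic degree-$\hat m$ polynomials $\omega_t(\lambda)$ and $\omega_s(\lambda)-\sum_k r_k^2\prod_{\ell\neq k}(\lambda-s_\ell^2)$ agree at the $\hat m$ points $\lambda=s_j^2$, hence coincide, so the eigenvalues of $D+rr^T$ are exactly the $t_j^2$.
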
 

\begin{proof} The squares of the singular values $t_1,\ldots,t_{\hat m}$ of $(r\;\; \hat S)$ are the eigenvalues of 
\[
(r\;\; \hat S) (r\;\; \hat S)' = \hat S \hat S' + r r' = \diag(s_1^2,\ldots,s_{\hat m}^2) + r r'.
\]
Now, any set of values $t_j^2$ for which $t_j$ satisfies the interlacing property (\ref{eqn:interlacing}) can be obtained in this way,
that is, as the eigenvalues of a positive semi-definite rank-one perturbation of $D=\diag(s_1^2,\ldots,s_{\hat m}^2)$ (see, e.g., \cite[Sect.~2]{Thompson}). Since $r r'$ does not depend on the signs of the individual entries of $r$, we can always choose $r \in \R_{\geq 0}^{\hat m}$. If $r_\nu = 0$ for some $\nu$, then the $\nu$-th row  and the $\nu$-th column of $r r'$ are
zero which means that $s_\nu^2$ appears among the values of $t_j^2$. Hence, \emph{strict} interlacing implies $r\in \R_{>0}^{\hat m}$.

Given such an $r  \in \R_{>0}^{\hat m}$, the eigenvalues $t_j^2$ of $D + r r'$ are known (\cite[Lemma~8.4.3]{Golub}) to be {\em strictly} interlacing with the $s_k^2$  and satisfy the secular equation
\[
f(t_j^2) = 0\quad (j=1,\ldots,\hat m),\qquad f(\lambda) = 1+  r' (D-\lambda I)^{-1}  r,
\]
which is (\ref{eqn:secular}). 
Since the determinant (\ref{eqn:cauchy_det}) given below is non-zero and, hence, the Cauchy matrix 
\[
C = \left(\frac{1}{t_j^2-s_k^2}\right)_{1\leq j,k \leq \hat m}
\]
is non-singular, there is a one-to-one correspondence of $r \in \R^{\hat m}_{>0}$ with those $t$  that \emph{strictly} 
interlace with $s$. Because each of the steps $t \mapsto C \mapsto r$ is smooth, we have therefore proved that $\Phi$ is a diffeomorphism.

By relating Cauchy matrices with Lagrangian polynomial interpolation, Schechter \cite[Eq.~(16)]{Schechter} gave
a short and simple proof of the explicit formula (\ref{eqn:secular_solution}). Differentiation with respect to $t_k$ gives
\[
J_{jk}=\frac{\partial r_j}{\partial t_k}  = \frac{r_j t_k}{s_j^2-t_k^2}.
\]
Hence, $$J = \diag(r_1,\ldots,r_{\hat m}) C \diag(t_1,\ldots,t_{\hat m}),$$ which implies $\det J =  t_1\cdots t_{\hat m} \, r_1\cdots r_{\hat m}\, \det C$.
Now, using the explicit determinantal formula \cite[Eq.~(4)]{Schechter} 
\begin{equation}\label{eqn:cauchy_det}
\det C = \frac{\prod_{j < k} (t_j^2 - t_k^2)(s_k^2-s_j^2)}{\prod_{j,k} (t_j^2-s_k^2)}
\end{equation}
and
\[
r_1^2\cdots r_{\hat m}^2 = (-1)^{\hat m} \frac{\omega_t(s_1^2)\cdots \omega_t(s_{\hat{m}}^2)}{\omega_s'(s_1^2)\cdots \omega_s'(s_{\hat{m}}^2)},
\]
together with the following straightforward evaluations of the product terms
\[
(-1)^{\hat m}\omega_t(s_1^2)\cdots \omega_t(s_{\hat{m}}^2) = \prod_{j,k} (t_j^2 - s_k^2),\qquad 
\omega_s'(s_1^2)\cdots \omega_s'(s_{\hat{m}}^2) = \prod_{j\neq k}(s_j^2-s_k^2),
\]
one gets
\[
\det J = \frac{t_1 \cdots t_{\hat{m}}}{r_1 \cdots r_{\hat{m}}}\cdot  \frac{\prod_{j<k} (t_j^2 - t_k^2)}{\prod_{j < k} (s_j^2 - s_k^2)}.
\]
 With
\begin{gather*}
\prod_{j<k} (t_j^2 - t_k^2) = \Delta(t_{\hat{m}}^2,\ldots,t_1^2),\\*[2mm] \prod_{j < k} (s_j^2 - s_k^2) = \Delta(s_{\hat{m}}^2,\ldots,s_1^2) = (s_1\cdots s_m)^{2\mu} \Delta(s_m^2,\ldots,s_1^2),
\end{gather*}
one finally gets the expression \eqref{eqn:jacobian} by using \eqref{eqn:t_prod} if $\mu=1$.
\end{proof}


\section{Random Matrix Models for the Odd and Even Singular Values}\label{section:newmodel}

Because of interlacing, the factorization of the joint density stated
in Theorem~\ref{thm:JointDensity} does not reveal an independence
between the $x$ and the $y$ components of the singular values, or to
the same end, between the $t$ and the $s$ components. If we change,
however, the $(t,s)$ coordinates to the $(r,s)$ coordinates
introduced in Lemma~\ref{lem:2:2}, the interlacing is replaced by just
a positivity condition on the $r$ components. The following theorem, which sharpens Theorem~\ref{thm:main1},
shows that not only are the $r$ and the $s$ components independent
of each other but both sets of components have so much additional
structure that they can be completely described in terms of known
distributions.

\begin{theorem}\label{thm:independent} Applying the
transform $(t,s) \mapsto (r,s)$ of Lemma~\ref{lem:2:2} to the $(t,s)$ parametrization \eqref{eqn:coord_st} of the decimated ensembles
$\odd|\GOE_n|$ and $\even|\GOE_n|$
defines a set of random variables  $r_k$,
which are distributed as $\chi_2$ for $k=1,\ldots,m$ and, if $\mu=1$, distributed as
$\chi_1$ for $k=m+1$. They are independent of each other and of the even singular values $s$, which are
jointly distributed~as
\[
\even |\GOE_n| \overset{\rm d}{=} \aGUE_n.
\]
\end{theorem}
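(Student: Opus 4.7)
The plan is to derive the joint density of $|\GOE_n|$ in $(r,s)$ coordinates by applying the diffeomorphism of Lemma~\ref{lem:2:2} to the density of Theorem~\ref{thm:JointDensity}, and to read off the distributional claims from the resulting factorization.

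First, I rewrite the density of Theorem~\ref{thm:JointDensity} in the $(t,s)$ coordinates of~\eqref{eqn:coord_st}. Since the map $(x,y)\leftrightarrow(t,s)$ is a mere permutation of coordinates, no Jacobian is needed, and both parities take the uniform form, on the region~\eqref{eqn:interlacing},
\[
\mathrm{const}\cdot\prod_{k=1}^m t_k^{1-\mu}\,s_k^\mu\cdot\exp\Bigl(-\tfrac12\sum_k t_k^2-\tfrac12\sum_k s_k^2\Bigr)\cdot\Delta(t_1^2,\ldots,t_{\hat m}^2)\,\Delta(s_1^2,\ldots,s_m^2).
\]

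Next, I apply $\Phi^{-1}\colon t\mapsto r$ at fixed~$s$ and divide by the absolute value of the Jacobian~\eqref{eqn:jacobian}. Three cancellations are immediate: the $\Delta(t^2)$ factor above cancels the one in the numerator of~\eqref{eqn:jacobian}; the factor $\prod_{k=1}^m t_k^{1-\mu}$ cancels $(t_1\cdots t_{\hat m})^{1-\mu}$ (both equal $t_1\cdots t_m$ when $\mu=0$, both equal $1$ when $\mu=1$); and $\prod_{k=1}^m s_k^\mu$ combines with $(s_1\cdots s_m)^\mu$ from the Jacobian to give $(s_1\cdots s_m)^{2\mu}$. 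Using the sum-of-squares identity~\eqref{eqn:t_sum_squared} to rewrite the Gaussian exponent then yields the factorized density
\[
g(r,s)=C\cdot\Bigl(\prod_{k=1}^m r_k\,e^{-r_k^2/2}\Bigr)\,e^{-\mu r_{\hat m}^2/2}\cdot (s_1\cdots s_m)^{2\mu}\,e^{-\sum_k s_k^2}\,\Delta(s_1^2,\ldots,s_m^2)^2.
\]

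All claims follow at once. The first two factors, after normalization, recognise $r_1,\ldots,r_m$ as independent $\chi_2$ variables on the positive axis and, when $\mu=1$, $r_{\hat m}=r_{m+1}$ as an independent $\chi_1$; these are jointly independent of~$s$. The remaining $s$-factor is the well-known joint density of the positive singular values of $\aGUE_n$ (the $(s_1\cdots s_m)^2$ prefactor when $\mu=1$ precisely reflecting the forced zero eigenvalue of an odd-dimensional skew-symmetric matrix), which in particular re-establishes~\eqref{eqn:even_aGUE}. The main obstacle is just the $\mu$-dependent bookkeeping between Theorem~\ref{thm:JointDensity} and~\eqref{eqn:jacobian}; once the exponents are aligned and \eqref{eqn:t_sum_squared} is invoked, the factorization and the identification of the $s$-marginal drop out simultaneously.
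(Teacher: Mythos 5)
Your proof is correct and follows essentially the same route as the paper's own argument: recast the joint density of Theorem~\ref{thm:JointDensity} in the $(t,s)$ coordinates, push it forward along $\Phi^{-1}$ by dividing by the Jacobian \eqref{eqn:jacobian}, cancel the Vandermonde and $t$-product factors, and use \eqref{eqn:t_sum_squared} to untangle the exponent into independent $\chi_2$ (plus a $\chi_1$ when $\mu=1$) and anti-GUE pieces. The only cosmetic difference is that the paper tracks the normalization constants explicitly and writes the Vandermondes in the reversed-index (increasing-argument) order that keeps them manifestly nonnegative, whereas you absorb the sign into the constant.
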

\begin{proof}
In terms of the $(t,s)$ coordinates, the joint density \eqref{eqn:JointDensity} of the singular values of the \GOE{} can be recast in the form
\begin{multline}\label{eqn:singular_density}
q(s;t) = c_n 2^n n! \cdot \left(s_1\cdots s_m\right)^\mu \left(t_1\cdots t_{\hat{m}}\right)^{1-\mu} \\*[0mm] \cdot \Delta(s_m^2,\ldots,s_1^2) \Delta(t_{\hat{m}}^2,\ldots,t_1^2) \cdot e^{-\sum_{j=1}^m \frac{s_j^2}{2} - \sum_{j=1}^{\hat{m}}\frac{t_j^2}{2}},
\end{multline}
where the case distinction between even ($\mu=0$) and odd ($\mu=1$) orders $n$ has been expressed in terms of powers. If we
apply the coordinate change $(t,s) \mapsto (r,s)$ of Lemma~\ref{lem:2:2}, which is a diffeomorphism up to an exceptional set of zero probability, the density with respect to the
$(r,s)$ is 
\begin{multline*}
 \left( \det \left(\frac{\partial r_j}{\partial t_k}\right)_{1\leq j,k\leq \hat m}\right)^{-1} \cdot q(s;t)
= r_1\cdots r_m \cdot \frac{(s_1  \cdots s_m)^{\mu}\,\Delta(s_m^2,\ldots,s_1^2)}{(t_1  \cdots t_{\hat m})^{1-\mu}\,\Delta(t_{\hat m}^2,\ldots,t_1^2)}\; q(s;t)\\*[2mm]
= \left(\prod_{j=1}^m r_j e^{-r_j^2/2}\right) \cdot 
\left(\sqrt{\frac{2}{\pi}}\,e^{-r_{\hat{m}}^2/2}\right)^\mu \cdot 
\left( \delta_\mu c_n 2^n n! \cdot \prod_{j=1}^m s_j^{2\mu} e^{-s_j^2} \cdot \Delta(s_m^2,\ldots,s_1^2)^2\right)
\end{multline*}
with $\delta_\mu=\left(\pi/2\right)^{\mu/2}$. Here we used expression (\ref{eqn:jacobian}) for the Jacobian and simplified the exponential functions according to (\ref{eqn:t_sum_squared}). On their supporting domains, the first $m$ factors of the resulting density are a $\chi_2$-density each, the next one is 
a $\chi_1$-density if $\mu=1$ (disappearing if $\mu=0$), and the last one is the joint density of the anti-GUE of order $n$,
see \cite[Sect.~13.1]{Mehta} or \cite[Ex. 1.3.5(iv)]{ForresterBook}.
\end{proof}

\begin{remark} As a side product, the proof shows that the normalization constant $a_n$ of the joint density of the anti-GUE, if extended
by symmetry to be supported on $[0,\infty)^m$, is given by
\[
a_n = c_n \left(\frac{\pi}{2}\right)^{\mu/2} \frac{2^n n!}{m!}\qquad 
(n = 2m+\mu, \mu = 0,1).
\]
This is consistent with the explicit formulae for $c_n^{-1}$ and $a_n^{-1}$ given in \cite[Eq.~(1.163)/Eq.~(4.157)]{ForresterBook}.
\end{remark}

The proof of Theorem~\ref{thm:independent} shows that the joint density $p(t|s)$ of the $t$ variables conditioned on $s$
is given by the expression
\begin{equation}\label{eqn:conditional}
p(t|s) = \frac{1}{\delta_\mu} \frac{\left(t_1\cdots t_{\hat{m}}\right)^{1-\mu}\Delta(t_{\hat{m}}^2,\ldots,t_1^2)}
{\left(s_1\cdots s_m\right)^{\mu}\Delta(s_m^2,\ldots,s_1^2)}   e^{-\sum_{j=1}^{\hat{m}} t_j^2/2 + \sum_{j=1}^m s_j^2/2}.
\end{equation}
This is just a particular case of a general result by Forrester
and Rains \cite[Cor.~3]{FR2005}, which gives the probability $p(t|s)$ if the  $t_k$ are the solutions of the secular equation~(\ref{eqn:secular}) with parameters $r_j$ being independently gamma distributed. In retrospect, we could thus have proved Theorem~\ref{thm:independent}, based on Theorem~\ref{thm:JointDensity}, starting
with the Forrester--Rains formula (\ref{eqn:conditional})  and working backwards.

Now, Theorem~\ref{thm:independent} and Lemma~\ref{lem:2:1} yield a new random matrix model for $|\GOE_n|$ which amounts to the singular values of certain bordered skew-symmetric Gaussian matrices.

\begin{corollary}\label{cor:newmodel-chi_n} Let $X \in \R^{n \times n}$ be a random matrix with independent standard 
normal entries. Denote by $G=(X+X')/2$ its symmetric 
and by $A=(X-X')/2$ its skew-symmetric part. Let $\tau_n$
be a $\chi_n$-distributed random variable that is independent of $X$. Then, both the singular values of $G$ and the singular values of the bordered matrix (with $e_1$ denoting the first unit vector) 
\begin{equation}\label{eqn:H-model}
H=(\tau_n e_1\;\; A)
\end{equation}
are jointly distributed as those of the $\GOE$ of order $n$. The same holds if the matrix $H$ is obtained from bordering 
$A$ with 
an independent standard normal vector.
\end{corollary}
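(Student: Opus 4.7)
The claim for $G$ is immediate from the definitions in Section~\ref{section:introduction}. For $H$, I plan to read off the singular-value distribution via Lemma~\ref{lem:2:1} and then match it against the characterization of $|\GOE_n|$ furnished by Theorem~\ref{thm:independent}. Applied to the SVD~\eqref{eqn:SVD_A} of $A$, Lemma~\ref{lem:2:1} splits the $n$ singular values of $H=(b\;\;A)$ into the $m$ distinct positive singular values $s_1,\ldots,s_m$ of $A$ (i.e., a sample from $\aGUE_n$) together with the singular values $t_1,\ldots,t_{\hat m}$ of the small bordered diagonal $(r\;\;\hat S)$, where $r_j=\sqrt{u_j^2+v_j^2}$ for $j\leq m$ and, when $\mu=1$, $r_{m+1}=|\eta|$, with $(u,v,\eta)=Ub$. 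Because the map $(r,s)\mapsto(t,s)$ of Lemma~\ref{lem:2:2} is a diffeomorphism up to a null set, Theorem~\ref{thm:independent} can be read in reverse: it suffices to check, in either of the two borderings, that the variables $r_j$ turn out to be mutually independent with $r_1,\ldots,r_m\sim\chi_2$ and (if $\mu=1$) $r_{m+1}\sim\chi_1$, and independent of~$s$.

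For $b$ standard normal and independent of $A$, conditionally on $A$ the vector $Ub$ is a deterministic orthogonal matrix applied to a standard normal, so itself standard normal, and this conditional distribution does not depend on $A$. Hence $Ub$ is unconditionally standard normal and independent of $A$, and $(u,v,\eta)$ consists of $n$ i.i.d.\ standard normals independent of $s$. The statements $r_j=\sqrt{u_j^2+v_j^2}\sim\chi_2$ and $|\eta|\sim\chi_1$ are then immediate, completing this case.

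For $b=\tau_n e_1$, I will reduce to the previous case via the orthogonal invariance of the anti-GUE. Since $X\mapsto OXO'$ is a Frobenius isometry of $\R^{n\times n}$ for any orthogonal $O$, both $X$ and $A=(X-X')/2$ are distributionally invariant under $A\mapsto OAO'$. The identity $HH'=bb'+AA'$ immediately yields $(Ob\;\;OAO')(Ob\;\;OAO')'=O(bb'+AA')O'$, so $(b\;\;A)$ and $(Ob\;\;OAO')$ share the same singular values pointwise. I then take $b^*$ standard normal independent of $A$ and choose a measurable Householder reflector $O=O(b^*)$ with $Ob^*=\|b^*\|e_1$; by independence of $b^*$ and $A$, $A':=OAO'$ has the law of $A$ and is independent of $\|b^*\|\sim\chi_n$. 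Hence the singular values of $(b^*\;\;A)$, already identified with $|\GOE_n|$ in the previous paragraph, coincide in law with those of $(\|b^*\|e_1\;\;A')\overset{\rm d}{=}(\tau_n e_1\;\;A)$. The step I expect to need the most care is the measure-theoretic bookkeeping behind these conditional-distribution arguments: verifying that the orthogonal factor $U$ in the SVD of $A$, and the reflector $O(b^*)$, can each be chosen as measurable maps on the relevant full-measure sets.
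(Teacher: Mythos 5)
Your proof is correct, and it reaches the conclusion by a genuinely different route for the case $b=\tau_n e_1$. The paper's proof applies the orthogonal factor $U$ from the SVD of $A$ to the bordering column, then invokes the fact that $U$ is Haar-distributed and independent of the singular values $s$; for $b=\tau_n e_1$ this gives $UH_1=\tau_n U_1$, with $U_1$ uniform on the sphere and $\tau_n$ an independent $\chi_n$, hence $UH_1$ is standard normal. You instead treat the standard-normal bordering first by conditioning on $A$ (which only needs $U$ to be a measurable function of $A$, not that $U$ is Haar), and then reduce the $\tau_n e_1$ case to the standard-normal case by the orthogonal invariance $A\overset{\rm d}{=}OAO'$ of the anti-GUE together with a $b^*$-measurable Householder reflector $O(b^*)$ satisfying $Ob^*=\|b^*\|e_1$; the identity $HH'=bb'+AA'$ then shows $(b^*\;A)$ and $(\|b^*\|e_1\; OAO')$ have the same singular values pointwise. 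The net effect is that you avoid explicitly invoking the Haar distribution of the SVD factor $U$ (and with it the independence of $U$ from $s$), replacing it by the more elementary orthogonal invariance of the Gaussian ensemble itself; the trade-off is an extra pass through the problem (two cases handled in sequence rather than one uniform argument). Your closing caveat about a measurable choice of $U$ and $O(b^*)$ on a full-measure set is apposite but standard, and if anything the paper's route — which asserts $U$ is Haar and independent of $s$ — needs just as much (or more) of that bookkeeping.
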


\begin{proof} Note that the singular values of the symmetric part $G=(X+X')/2$ are by {\em definition} jointly
distributed as the singular
values of the \GOE{} of order $n$. 

As discussed in the derivation of (\ref{eqn:SVD_A}), the singular value decomposition of $A$ takes the form
\[
U A V' = 
\begin{pmatrix}
S &0 &0 \\
0 &S &0 \\
0 &0 &0
\end{pmatrix}, \qquad S=\diag(s_1,\ldots,s_m),
\]
where the last row and columns are missing if $n$ is odd. By symmetry, the orthogonal matrix $U$ is Haar distributed---independently
of  $\{s_1,\ldots,s_m\}$, which are jointly distributed as the anti-\GUE{} of order $n$, 
cf. \cite[Sect.~13.1]{Mehta} or \cite[Ex. 1.3.5(iv)]{ForresterBook}. Applying $U$ to the first column $H_1$ of $H$ defines
\[
U H_1 = \tau_n U_1  =
\begin{pmatrix}
u\\
v\\
\eta
\end{pmatrix},\qquad u,v \in \R^m,\quad \eta \in \R^\mu.
\]
Since the first column $U_1$ of $U$ is uniformly distributed on the sphere $S^{n-1}$ and $\tau_n$ is independently $\chi_n$-distributed, we see
that $UH_1$  is a standard normal vector, see, e.g.,
\cite[Sect.~V.4]{Devroye}; the same conclusion holds for standard normal $H_1$. Hence, the
variables 
\[
r_j = \sqrt{u_j^2 + v_j^2} \qquad (j=1,\ldots,m)
\]
are independently $\chi_2$-distributed  and, if $\mu=1$, $r_{m+1} = |\eta|$ is independently  $\chi_1$-distributed.
Comparing the results of Lemma~\ref{lem:2:1} and of Theorem~\ref{thm:independent} finishes the proof.
\end{proof}

The random matrix model of the last corollary can easily be turned
into the following sparse model that separates the even and odd
singular values. Note that just one of the two matrices is square, the
other is rectangular.

\begin{corollary}\label{cor:bidiagonal} Let $n=2m+\mu$, $\mu=0,1$ and let $\tau_1,\ldots,\tau_n$ be independent random variables, with $\tau_k$ distributed as $\chi_k$. The union of the singular values of both the bidiagonal matrix
\[
B_\mu^{\odd} = (\tau_n e_1\;\; B_\mu^{\even}) \in \R^{(m+\mu)\times (m+1)}
\]
and the bidiagonal matrix $B_\mu^{\even}\in \R^{(m+\mu)\times m}$, defined by
\[
B_0^{\even} = \frac{1}{\sqrt{2}}
  \begin{pmatrix}
    \tau_{2m-1}  \\
    \tau_{2m-2} & \tau_{2m-3} \\
    &\ddots & \ddots  \\
    &&\tau_2  & \tau_1
  \end{pmatrix},\qquad
B_1^{\even} = \frac{1}{\sqrt{2}}
  \begin{pmatrix}
    \tau_{2m}  \\
    \tau_{2m-1} & \tau_{2m-2} \\
    &\ddots & \ddots  \\
    &&\tau_3  & \tau_2 \\
    &&& \tau_1
  \end{pmatrix},
  \]
is jointly distributed as $|\GOE_n|$. Here, the singular values of $B_\mu^{\odd}$ correspond to
$\odd|\GOE_n|$ and the singular values of $B_\mu^{\even}$ correspond to $\even|\GOE_n|$, both drawn from the same
ensemble.
\end{corollary}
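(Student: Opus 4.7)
The natural plan is to combine Corollary~\ref{cor:newmodel-chi_n} with the bidiagonal reduction of Lemma~\ref{lem:square}. By Corollary~\ref{cor:newmodel-chi_n}, the ensemble $|\GOE_n|$ is jointly distributed as the singular values of $H=(\tau_n e_1\;\;A)$, where $A=(X-X')/2$ is the skew-symmetric Gaussian of order~$n$ and $\tau_n\sim\chi_n$ is drawn independently. The strategy is to apply orthogonal transformations to $H$ that preserve both the bordering column $\tau_n e_1$ on the left and the singular values overall, and that reduce $A$ to the bidiagonal form $B_\mu^{\even}$, so that $H$ itself becomes $B_\mu^{\odd}=(\tau_n e_1\;\;B_\mu^{\even})$.

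The orthogonal transformations acting on the left must fix $e_1$ (i.e.\ act as the identity on the first basis vector), while those on the right are free. Under this restriction one applies the Dumitriu--Forrester reduction of Lemma~\ref{lem:square} to $A$. The skew-symmetry of $A$ makes the entries above its diagonal independent $N(0,1/2)$; a carefully orchestrated sequence of Householder reflections then peels them off into norms of Gaussian subvectors, producing independent $\chi_k/\sqrt{2}$ scalars. Tracking the successive dimensions $k=1,2,\ldots,n-1$ places these scalars in the interleaved bidiagonal pattern of $B_\mu^{\even}$ exactly as stated, while the extra $\chi_n=\tau_n$ appears in the first column of $B_\mu^{\odd}$ as a consequence of the bordering. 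By orthogonal invariance, the singular values of $B_\mu^{\odd}$ are then jointly distributed as $|\GOE_n|$; deleting the bordering column reduces $B_\mu^{\odd}$ to $B_\mu^{\even}$, whose singular values match the distinct singular values $s_1,\ldots,s_m$ of $A$ and are therefore jointly distributed as $\aGUE_n=\even|\GOE_n|$ by Theorem~\ref{thm:main1}. The remaining singular values of $B_\mu^{\odd}$ must then constitute $\odd|\GOE_n|$.

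The main obstacle is the distributional bookkeeping: producing a reduction of $A$ whose Householder steps yield \emph{exactly} the interleaved $\chi_k$ indices $\tau_1,\ldots,\tau_{n-1}$ of $B_\mu^{\even}$, while respecting the skew-symmetry constraint (which couples pairs of entries) and preserving the bordering column of $H$. For a full Gaussian matrix this is the content of the classical Dumitriu--Edelman theory; the adaptation to the skew-symmetric case with bordering is the substance of Lemma~\ref{lem:square}.
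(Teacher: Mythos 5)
Your overall plan---start from $H=(\tau_n e_1\;\;A)$ as in Corollary~\ref{cor:newmodel-chi_n}, apply orthogonal transformations that preserve the bordering column, and read off the bidiagonal structure---is the right idea, but the execution has several real gaps.

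First, the reduction you invoke is mis-attributed. Lemma~\ref{lem:square} is the Dumitriu--Forrester $RQ$-decomposition that turns one \emph{already-bidiagonal} rectangular matrix into a square one; it plays no role in this corollary (it is used only later, in Theorems~\ref{thm:square-bidiagonal-even}/\ref{thm:square-bidiagonal-odd}). What is actually needed here is the Householder \emph{tridiagonalization} of the skew-symmetric matrix $A$ from Dumitriu--Edelman \cite[Sect.~II]{Dumitriu}, which gives $UAV' = T$ with $T$ a skew-tridiagonal matrix whose off-diagonal entries are independent $\tau_k/\sqrt2$ with $\tau_k\sim\chi_k$, and with $Ue_1=e_1$ because the first row and column are never touched. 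Householder reflections acting on the left fixing $e_1$ cannot reduce $A$ all the way to the $\hat m\times m$ bidiagonal matrix $B_\mu^{\even}$ directly---among other things, the dimensions do not agree.

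Second, and more importantly, you omit the step that actually separates the even and odd singular values: a simultaneous row and column permutation of $T$ that groups odd indices before even ones. After that permutation, $(\tau_n e_1\;\;T)$ becomes the orthogonally blocked matrix
\[
\begin{pmatrix}
\tau_n e_1 & 0 & B_\mu^{\even}\\
0 & (B_\mu^{\even})' & 0
\end{pmatrix},
\]
whose singular values are exactly the union of those of $B_\mu^{\odd}=(\tau_n e_1\;\;B_\mu^{\even})$ and those of $B_\mu^{\even}$. Without the permutation there is no reason the singular values of $H$ should split into the two claimed sets. Your sentence ``the singular values of $B_\mu^{\odd}$ are then jointly distributed as $|\GOE_n|$'' is in fact dimensionally impossible: $B_\mu^{\odd}$ is $\hat m\times(m+1)$ and has only $\hat m\approx n/2$ singular values, whereas $|\GOE_n|$ has $n$. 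What the corollary asserts, and what the permutation shows, is that the \emph{union} of the singular values of $B_\mu^{\odd}$ and $B_\mu^{\even}$ is $|\GOE_n|$; the identification of which set is even-location and which is odd then comes from the interlacing of Lemma~\ref{lem:2:1}, since $B_\mu^{\odd}$ is $B_\mu^{\even}$ bordered by a column. Your subsequent reasoning (``deleting the bordering column $\dots$ the remaining singular values must then constitute $\odd|\GOE_n|$'') inherits this confusion: deleting a column does not subtract some singular values and leave the rest.
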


\begin{proof} Using the notation of Corollary
  \ref{cor:newmodel-chi_n}, a Householder tridiagonalization of $A$,
  rescaling rows and columns by $-1$ as necessary, yields $U A V' = T$
  with orthogonal matrices $U$, $V$ and
\[
T =  \frac{1}{\sqrt{2}}
  \begin{pmatrix}
    0 & \tau_{n-1} \\
    \tau_{n-1} & 0 & \tau_{n-2} \\
    &\tau_{n-2} & 0 & \tau_{n-3} \\
    &&\ddots & \ddots & \ddots \\
    &&&\tau_2 & 0 & \tau_1\\
    &&&& \tau_1 & 0
  \end{pmatrix},
\]
where the entries $\tau_k$ are jointly distributed as independent
$\chi_k$-variables with degrees of freedom $k=1,\ldots,n$, see
\cite[Sect. II]{Dumitriu}.  Since Householder triadiagonalizations do
not operate on the first row and column, we have $Ue_1 = e_1$ and,
hence,
\[
U \cdot (\tau_n e_1 \;\; A) \cdot 
\begin{pmatrix}
1 & 0 \\
0 & V'
\end{pmatrix} = (\tau_n e_1 \;\; T).
\]
Thus the matrices $(\tau_n e_1 \;\; A)$ and $(\tau_n e_1 \;\; T)$ have
the same singular values. By a simultaneous row and column permutation
of $T$ so that the odd columns and rows occur before the even ones, we
see that the matrices $(\tau_n e_1\;\; T)$ and (with the
length of the first unit vector $e_1$ adjusted)
\[
\begin{pmatrix}
\tau_n e_1 & 0 & B_\mu^{\even}  \\*[2mm]
0 & (B_\mu^{\even}) ' & 0 
\end{pmatrix}
\]
have the same singular values. Interlacing  shows that the singular values of $B_\mu^{\even} $ correspond to the even ones
of $(\tau_n e_1\;\; A)$ and the singular values of $(\tau_n e_1\;\; B_\mu^{\even})$ correspond to the odd ones. 
\end{proof}

  \begin{remark} 
    The mapping from the singular values of a sample drawn from the GOE to $\tau$ coordinates is deterministic and can be made
    explicit (the same remark applies to the construction of the $\xi$ variables in the next section): starting with $(r,s)$ coordinates, the $\tau$ are
    obtained by applying appropriate orthogonal row and column transformations to the
    matrix 
    \[
    \begin{pmatrix}
    0 & 0 & S'\\
    r &-S&0\\
    \end{pmatrix},\quad\text{where}\quad
    S = \begin{pmatrix}
    s_1 & & \\
    & \ddots &\\
    && s_m\\
    0 &\cdots & 0
    \end{pmatrix};
    \]
 if $\mu=0$, the last row of $S$ is missing.
  \end{remark}    



\section{Square Bidiagonal Matrix Models and the Determinant}\label{section:determinant}

To study the distribution of determinants we turn the bidiagonal
random matrix model of Corollary~\ref{cor:bidiagonal} into one with
square matrices only. Key to this transformation is the following
variant of a result by Dumitriu and Forrester \cite[Claim
6.5]{Dumitriu}.

\begin{lemma}\label{lem:square}
  Let the variables $\tau_k$ ($k=1,\ldots,2m-1$) be distributed as
  $\chi_k$, with the distribution of $\tau_{2m}$ arbitrary such that
  $\tau_1,\dotsc,\tau_{2m}$ are independent of each other. Then the
  singular values of the $m\times (m+1)$ bidiagonal matrix
\begin{equation}\label{eqn:Bmatrix}
B = 
  \begin{pmatrix}
    \tau_{2m} & \tau_{2m-1}  \\
    & \tau_{2m-2} & \tau_{2m-3} \\
    & &\ddots & \ddots  \\
    & & &\tau_2  & \tau_1
  \end{pmatrix}
\end{equation}
are the same as those of the $m\times m$ bidiagonal matrix
\begin{equation}\label{eqn:Rfactor}
R = \begin{pmatrix}
    \xi_{2m+1} & \xi_{2m-2}  \\
    & \xi_{2m-1} & \xi_{2m-4} \\
    & &\ddots & \ddots  \\
    & & &\xi_5  & \xi_2 \\
    & & & & \xi_3
  \end{pmatrix}
  \end{equation}
constructed by the normalized reduced $RQ$-decomposition\footnote{The $R$-factor can equivalently be obtained from the Cholesky-type decomposition $RR'=BB'$.} $B = R Q$ with a row-orthogonal matrix~$Q$, that is, by the almost
surely positive solution of the set of equations
\begin{subequations}\label{eqn:RQ}
\begin{align}
\xi_{2k+1}^2 + \xi_{2k-2}^2 &= \tau_{2k}^2+\tau_{2k-1}^2 \qquad (k=1,\ldots,m),\label{eqn:RQ_a}\\*[2mm]
\xi_{2k+1}\xi_{2k} &= \tau_{2k+1}\tau_{2k}\qquad (k=1,\ldots,m-1).
\end{align}
\end{subequations}
The variables $\xi_2,\ldots,\xi_{2m-1}$ are distributed as $\chi_2,\ldots,\chi_{2m-1}$; they are independent of each other and of $\tau_{2m}$.
The variable $\xi_{2m+1}$ is of the form
\[
\xi_{2m+1} = \sqrt{\xi_1^2+\tau_{2m}^2}, \quad\text{where}\quad
\xi_1 = \sqrt{\tau_{2m-1}^2-\xi_{2m-2}^2}
\]
is distributed as $\chi_1$ and is also independent of $\xi_2,\ldots,\xi_{2m-1}$ and of $\tau_{2m}$.
\end{lemma}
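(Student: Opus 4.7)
My plan has two main parts: a linear-algebraic reduction that identifies the system \eqref{eqn:RQ}, and a distributional analysis of that system via a cascading polar change of coordinates.

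First, the coincidence of singular values is immediate from $B = RQ$ with $QQ'=I$, so $BB' = RR'$. A direct computation of the two tridiagonal $m\times m$ matrices $BB'$ and $RR'$ shows that equating their entries gives precisely \eqref{eqn:RQ}. The system \eqref{eqn:RQ} admits a unique almost-surely positive solution, constructed recursively: set $\xi_0 := 0$, and use \eqref{eqn:RQ_a} with $k=1$ to get $\xi_3 = \sqrt{\tau_1^2+\tau_2^2}$; then alternate the two equations of \eqref{eqn:RQ} to successively extract $\xi_2, \xi_5, \xi_4, \ldots, \xi_{2m-1}, \xi_{2m-2}$; finally, the defining identities $\xi_1 := \sqrt{\tau_{2m-1}^2 - \xi_{2m-2}^2}$ and \eqref{eqn:RQ_a} at $k=m$ together give $\xi_{2m+1} = \sqrt{\xi_1^2 + \tau_{2m}^2}$.

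The heart of the proof is the joint distribution of the $\xi$'s. I would introduce the auxiliary quantities $\alpha_k := \sqrt{\tau_{2k-1}^2 - \xi_{2k-2}^2}$ (with $\alpha_1 := \tau_1$) and prove by induction on $k$ the invariant that after loop $k$ of the recursion the variables $\xi_3, \xi_2, \xi_5, \xi_4, \ldots, \xi_{2k+1}, \xi_{2k}, \alpha_{k+1}$ are mutually independent, with $\xi_j \sim \chi_j$ and $\alpha_{k+1} \sim \chi_1$, and jointly independent of the still-untouched inputs $\tau_{2k+2}, \ldots, \tau_{2m}$. The inductive step is a double polar pass: first $(\alpha_k, \tau_{2k}) \mapsto (\xi_{2k+1}, \theta_k)$ with $\xi_{2k+1} = \sqrt{\alpha_k^2 + \tau_{2k}^2}$ and $\tan\theta_k = \alpha_k/\tau_{2k}$; second $(\tau_{2k+1}, \theta_k) \mapsto (\xi_{2k}, \alpha_{k+1}) = (\tau_{2k+1}\cos\theta_k,\, \tau_{2k+1}\sin\theta_k)$. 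The classical fact that independent $\chi_a$ and $\chi_b$ variables polar-decompose into an independent $\chi_{a+b}$ radius and an angle with density proportional to $\sin^{a-1}\theta\cos^{b-1}\theta$ on $(0,\pi/2)$, applied twice with $(a,b)=(1,2k)$ (once forward, once in reverse), converts independent inputs distributed as $\chi_1\times\chi_{2k}\times\chi_{2k+1}$ into independent outputs distributed as $\chi_{2k+1}\times\chi_{2k}\times\chi_1$.

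After $m-1$ such iterations all of $\xi_2, \xi_3, \ldots, \xi_{2m-1}$ and $\alpha_m = \xi_1$ are mutually independent with the claimed $\chi$-distributions and jointly independent of $\tau_{2m}$; since $\xi_{2m+1}$ depends only on $\xi_1$ and $\tau_{2m}$, the remaining claims of the lemma are automatic. The main technical obstacle is threading a single auxiliary $\chi_1$-variable through the entire cascade so that the exponents of $\sin$ and $\cos$ in the angular density line up correctly between consecutive loops; this is what forces the peculiar indexing pattern of the $\xi_j$'s in $R$, and is precisely the point where the Dumitriu--Forrester $RQ$-trick \cite[Claim~6.5]{Dumitriu} cited in the text enters the proof.
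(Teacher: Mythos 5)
Your argument is correct and mirrors the paper's proof almost step for step: the ``double polar pass'' in your inductive step is exactly the Dumitriu--Forrester involution $\phi(X,Y,Z)=\bigl(ZX/(X+Y),\,ZY/(X+Y),\,X+Y\bigr)$ on squared $\chi$-variables, your auxiliary $\alpha_k$ is the paper's $\tau_{1,k}$, and the classical fact you invoke about polar decomposition of independent $\chi_a$, $\chi_b$ into an independent $\chi_{a+b}$-radius and a $\sin^{a-1}\theta\cos^{b-1}\theta$-distributed angle is precisely the result from Devroye that the paper cites. The approach, the recursion, and the role of the terminal $\chi_1$-variable are identical, so there is no genuine difference to report.
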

\begin{proof} A well-known result \cite[Thm.~IX.3.1]{Devroye} about the $\chi^2$-distribution states that the
involution
\begin{equation}\label{eqn:involution}
\phi(X,Y,Z) = \left(Z \frac{X}{X+Y},Z \frac{Y}{X+Y},X+Y\right)
\end{equation}
maps a set of mutually independent random variables $X$, $Y$, $Z$ distributed as $\chi_r^2$, $\chi_s^2$ and $\chi_{r+s}^2$ to a new
set of mutually independent random variables of exactly the same type. Starting with $\tau_{1,1}=\tau_1$, the system \eqref{eqn:RQ} is recursively solved for the variables $\xi_2,\ldots,\xi_{2m-1}$ by
\[
(\tau_{1,k+1}^2,\xi_{2k}^2,\xi_{2k+1}^2) = \phi(\tau_{1,k}^2,\tau_{2k}^2,\tau_{2k+1}^2),\qquad (k=1,\ldots,m-1).
\]
Hence, the variable $\xi_1 = \tau_{1,m}$ and the thus constructed $\xi_2,\ldots,\xi_{2m-1}$ are independent of each other and
of the not yet used variable $\tau_{2m}$; they are distributed as $\chi_k$ ($k=1,\ldots,2m-1$). Because $\phi$ is an
involution, there is
\[
\tau_{2m-1}^2 = \tau_{1,m}^2 + \xi_{2m-2}^2 = \xi_1^2 + \xi_{2m-2}^2.
\]
Hence, the yet to be used $k=m$ case of Eq.~\eqref{eqn:RQ_a} finally implies the asserted form of $\xi_{2m+1}$.
\end{proof}

\begin{remark} The use of the involution \eqref{eqn:involution} has been motivated by the observation \cite[Lemma~1]{EL-GUEsing} that for $2\times 2$ matrices the
$R$-factor of the $RQ$-decomposition
of a lower triangular matrix
\[
L = \begin{pmatrix}
z & 0\\
y & x
\end{pmatrix} = RQ,\qquad R = 
\begin{pmatrix}
\xi & \eta\\
0 & \zeta
\end{pmatrix},
\]
is induced by the transformation $(\xi^2,\eta^2,\zeta^2) = \phi(x^2,y^2,z^2)$. Basically, the $R$-factor \eqref{eqn:Rfactor} of the bidiagonal matrix \eqref{eqn:Bmatrix} is then
obtained by successively applying this transformation along the diagonal from the lower right to the upper left.
\end{remark}

Application of this lemma to Corollary~\ref{cor:bidiagonal} yields sparse random matrix models for
the odd and even  singular values of the \GOE{} in terms of bidiagonal \emph{square} matrices. We begin with
the case of even order $n=2m$. 

\begin{theorem}\label{thm:square-bidiagonal-even} Let $\xi_1,\ldots,\xi_{2m}$ be independent random variables, with $\xi_k$ distributed as~$\chi_k$. The union of the singular values of the two bidiagonal square matrices
\begin{gather*}
R_0^{\odd}  = 
\frac{1}{\sqrt{2}}\begin{pmatrix}
    \sqrt{\xi_1^2 + 2\xi_{2m}^2} & \xi_{2m-2}  \\
    & \xi_{2m-1} & \xi_{2m-4} \\
    & &\ddots & \ddots  \\
    & & &\xi_5  & \xi_2 \\
    & & & & \xi_3
  \end{pmatrix}\\
\intertext{and}
R_0^{\even} = 
\frac{1}{\sqrt{2}}
  \begin{pmatrix}
    \xi_1 & \xi_{2m-2}  \\
    & \xi_{2m-1} & \xi_{2m-4} \\
    & &\ddots & \ddots  \\
    & & &\xi_5  & \xi_2 \\
    & & & & \xi_3
  \end{pmatrix}
  \end{gather*}
is jointly distributed as $|\GOE_{2m}|$. Here, the singular values of $R_0^{\odd}$ correspond to
$\odd|\GOE_{2m}|$ and the singular values of $R_0^{\even}$ correspond to $\even|\GOE_{2m}|$, both drawn from the same
ensemble. 
\end{theorem}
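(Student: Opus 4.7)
The plan is to leverage Corollary~\ref{cor:bidiagonal} by applying Lemma~\ref{lem:square} to $B_0^{\odd}$, and then to recover $B_0^{\even}$ by observing that $B_0^{\odd}(B_0^{\odd})'$ and $B_0^{\even}(B_0^{\even})'$ differ by a rank-one term. First I would note that
\[
\sqrt{2}\,B_0^{\odd} = (\sqrt{2}\,\tau_{2m}\,e_1 \;\; \sqrt{2}\,B_0^{\even})
\]
is upper bidiagonal of exactly the form required by Lemma~\ref{lem:square}, with the leading entry $\sqrt{2}\,\tau_{2m}$ playing the role of the arbitrarily-distributed variable allowed by the lemma. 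Applying the lemma produces independent $\chi_k$-distributed variables $\xi_1,\ldots,\xi_{2m-1}$, independent of $\tau_{2m}$, together with an $R$-factor whose $(1,1)$-entry equals $\sqrt{\xi_1^2+2\tau_{2m}^2}$. Upon relabelling $\xi_{2m}:=\tau_{2m}$, this $R$-factor is exactly $\sqrt{2}\,R_0^{\odd}$, so the singular values of $R_0^{\odd}$ coincide with those of $B_0^{\odd}$ and thus yield $\odd|\GOE_{2m}|$.

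For the even part, the key observation is that $\sqrt{2}\,R_0^{\even}$ differs from $\sqrt{2}\,R_0^{\odd}$ only in the $(1,1)$-entry, where $\sqrt{\xi_1^2+2\xi_{2m}^2}$ is replaced by $\xi_1$. Writing $\tilde R=\sqrt{2}\,R_0^{\odd}$ and $\tilde R^*=\sqrt{2}\,R_0^{\even}$, the upper-triangular structure of $\tilde R$ forces all off-diagonal entries of $\tilde R\tilde R'-\tilde R^*(\tilde R^*)'$ to vanish (any cross term involves a first-column entry of $\tilde R$ below the diagonal), leaving the rank-one identity
\[
\tilde R^*(\tilde R^*)' = \tilde R\tilde R' - 2\xi_{2m}^2\,e_1 e_1'.
\]
On the other hand, the block decomposition of $\sqrt{2}\,B_0^{\odd}$ gives
\[
(\sqrt{2}\,B_0^{\odd})(\sqrt{2}\,B_0^{\odd})' = 2\tau_{2m}^2\,e_1 e_1' + (\sqrt{2}\,B_0^{\even})(\sqrt{2}\,B_0^{\even})',
\]
and combining this with $\tilde R\tilde R' = (\sqrt{2}\,B_0^{\odd})(\sqrt{2}\,B_0^{\odd})'$ and $\xi_{2m}=\tau_{2m}$ yields $\tilde R^*(\tilde R^*)' = (\sqrt{2}\,B_0^{\even})(\sqrt{2}\,B_0^{\even})'$. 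Hence $R_0^{\even}$ and $B_0^{\even}$ share the same singular values, corresponding to $\even|\GOE_{2m}|$.

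The main subtlety, more than a real obstacle, is joint-distributional bookkeeping: both $R_0^{\odd}$ and $R_0^{\even}$ must be built from a \emph{single} realization of $\xi_1,\ldots,\xi_{2m}$, so that their pair of singular-value spectra matches $(\odd|\GOE_{2m}|,\even|\GOE_{2m}|)$ jointly, not just marginally. Since the map $(\tau_1,\ldots,\tau_{2m})\mapsto(\xi_1,\ldots,\xi_{2m})$ provided by Lemma~\ref{lem:square} (together with $\xi_{2m}=\tau_{2m}$) is a single deterministic transformation carrying independent $\chi_k$ variables to independent $\chi_k$ variables, the pair identity in distribution transfers cleanly from $(B_0^{\odd},B_0^{\even})$ in Corollary~\ref{cor:bidiagonal} to $(R_0^{\odd},R_0^{\even})$, completing the proof.
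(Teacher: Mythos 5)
Your proposal is correct, and it differs from the paper's proof at one interesting point.  Both proofs start from Corollary~\ref{cor:bidiagonal} and rely on Lemma~\ref{lem:square}, but the paper handles the even matrix by applying Lemma~\ref{lem:square} a \emph{second} time, to the $m\times(m+1)$ version of $B_0^{\even}$ with leading entry~$0$, observing that the lemma's recursion produces the same $\xi_2,\dotsc,\xi_{2m-1}$ because those variables do not involve the leading entry, and that the formula $\xi_{2m+1}=\sqrt{\xi_1^2+\tau_{2m}^2}$ then collapses to $\xi_1$. You instead apply the lemma only to $B_0^{\odd}$ and recover the even matrix purely algebraically: since $\sqrt{2}R_0^{\odd}$ and $\sqrt{2}R_0^{\even}$ differ only in the $(1,1)$-entry and both are upper bidiagonal, the identity
\[
\bigl(\sqrt{2}R_0^{\odd}\bigr)\bigl(\sqrt{2}R_0^{\odd}\bigr)' - \bigl(\sqrt{2}R_0^{\even}\bigr)\bigl(\sqrt{2}R_0^{\even}\bigr)' = 2\xi_{2m}^2\,e_1 e_1'
\]
holds (the cross terms vanish because the first column of an upper triangular matrix has a single nonzero entry), and comparing this with the analogous rank-one identity for $B_0^{\odd},B_0^{\even}$ pins down $R_0^{\even}(R_0^{\even})'=B_0^{\even}(B_0^{\even})'$, hence equality of singular values. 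The rank-one route is slightly more explicit about \emph{why} the remaining $\xi$-variables are shared between the two $R$-factors (it exhibits the algebraic relation between the Cholesky factors directly), whereas the paper's simultaneous application is a bit more economical to state. Both approaches are equally rigorous, and your closing remark about the joint-distributional bookkeeping — that a single deterministic transformation of the $\tau$'s suffices — correctly justifies carrying the \emph{pair} law over, not just the marginals.
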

\begin{proof} Let $\tau_1,\ldots,\tau_{2m}$ be independent random variables, with $\tau_k$ distributed as $\chi_k$.
Prepending a zero column to the second matrix,
Corollary~\ref{cor:bidiagonal} shows that the singular values of the two matrices
\begin{gather*}
B_0^{\odd} = \frac{1}{\sqrt{2}}
  \begin{pmatrix}
    \tau_{2m}^{\odd} & \tau_{2m-1}  \\
    & \tau_{2m-2} & \tau_{2m-3} \\
    & &\ddots & \ddots  \\
    & & &\tau_2  & \tau_1
  \end{pmatrix}\\
  \intertext{and}
B_0^{\even} = \frac{1}{\sqrt{2}}
  \begin{pmatrix}
   \tau_{2m}^{\even} & \tau_{2m-1}  \\
     & \tau_{2m-2} & \tau_{2m-3} \\
     & &\ddots & \ddots  \\
     & & &\tau_2  & \tau_1
  \end{pmatrix},
\end{gather*}
where
\[
\tau_{2m}^{\odd} = \sqrt{2}\tau_{2m},\qquad \tau_{2m}^{\even} =0,
\]
are jointly distributed as $|\GOE_{2m}|$. Here, the singular values of $B_0^{\odd}$ correspond to
$\odd|\GOE_{2m}|$ and the singular values of $B_0^{\even}$ correspond to $\even|\GOE_{2m}|$, both drawn from the same
ensemble. If we apply the construction of Lemma~\ref{lem:square} to both matrices $B_0^{\odd}$ and $B_0^{\even}$ 
simultaneously, we obtain the $R$-factors $R_0^{\odd}$ and $R_0^{\even}$ with
one and the same set of variables $\xi_1,\ldots,\xi_{2m}$ subject to the asserted properties and additionally
\[
\xi_{2m+1}^{\odd} = \sqrt{\xi_1^2 + (\tau_{2m}^{\odd})^2} = \sqrt{\xi_1^2 + 2\tau_{2m}^2} ,\qquad \xi_{2m+1}^{\even} 
= \sqrt{\xi_1^2 + (\tau_{2m}^{\even})^2}= \xi_1.
\]
By defining $\xi_{2m}=\tau_{2m}$ we thus get the asserted form of $R_0^{\odd}$ and $R_0^{\even}$. 
\end{proof}

\begin{remark}
  The matrices $R_0^{\even}$ of
  Theorem~\ref{thm:square-bidiagonal-even} and $B_0^{\even}$ of
  Corollary~\ref{cor:bidiagonal} are superficially related, up to a
  different sample of the independent random variables, by a transposition
  followed by a cyclic permutation of their diagonals.  Such a
  transformation would not, in general, preserve singular values, but
  depends instead on properties of the $\chi$ distributions.
\end{remark}

The case of odd order exhibits a similar structure. The equivalence between the models $B_1^{\even}$ and $R_1^{\even}$
  used in the following theorem is also noted, for the anti-GUE, in
  \cite[Sect.~2]{EL-GUEsing} and \cite[Clm.~6.5]{Dumitriu}.

\begin{theorem}\label{thm:square-bidiagonal-odd} Let $\xi_1,\ldots,\xi_{2m+1}$ be independent random variables, with $\xi_k$ distributed as~$\chi_k$. The union of the singular values of the two bidiagonal square matrices
\[
R_1^{\odd}  = 
\frac{1}{\sqrt{2}}\begin{pmatrix}
    \sqrt{2} \xi_1  & \sqrt{2}\xi_{2m}  \\
    & \xi_{2m+1} & \xi_{2m-2} \\
    & &\ddots & \ddots  \\
    & & &\xi_5  & \xi_2 \\
    & & & & \xi_3
  \end{pmatrix}
\]
and
\[
R_1^{\even} = 
\frac{1}{\sqrt{2}}
  \begin{pmatrix}
    \xi_{2m+1} & \xi_{2m-2}  \\
    & \xi_{2m-1} & \xi_{2m-4} \\
    & &\ddots & \ddots  \\
    & & &\xi_5  & \xi_2 \\
    & & & & \xi_3
  \end{pmatrix}
\]
is jointly distributed as $|\GOE_{2m+1}|$. Here, the singular values of $R_1^{\odd}$ correspond to
$\odd|\GOE_{2m+1}|$ and the singular values of $R_1^{\even}$ correspond to $\even|\GOE_{2m+1}|$, both drawn from the same
ensemble. 
\end{theorem}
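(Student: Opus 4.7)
The plan is to follow the same scaffolding as the proof of Theorem~\ref{thm:square-bidiagonal-even} -- invoke Corollary~\ref{cor:bidiagonal} for $\mu=1$ and then Lemma~\ref{lem:square} -- supplemented by one additional application of the involution $\phi$ from~\eqref{eqn:involution} to handle the shape asymmetry between $B_1^{\odd}\in\R^{(m+1)\times(m+1)}$ and $B_1^{\even}\in\R^{(m+1)\times m}$ in the odd case.

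Starting from independent $\chi_k$-distributed $\tau_1,\ldots,\tau_{2m+1}$, Corollary~\ref{cor:bidiagonal} provides bidiagonal matrices $B_1^{\odd}$ and $B_1^{\even}$ whose singular values are jointly $|\GOE_{2m+1}|$. The transpose $(B_1^{\even})'$ is $m\times(m+1)$ upper bidiagonal, of the form required by Lemma~\ref{lem:square} after pulling out the uniform $1/\sqrt{2}$ prefactor, with $\tau_{2m}\sim\chi_{2m}$ taking on the role of the Lemma's arbitrary top-left entry. Applying the Lemma manufactures independent $\chi_k$-distributed variables $\xi_1,\xi_2,\ldots,\xi_{2m-1}$ together with the derived $\xi_{2m+1}:=\sqrt{\xi_1^2+\tau_{2m}^2}\sim\chi_{2m+1}$, and its $R$-factor, rescaled by $1/\sqrt{2}$, is precisely the theorem's $R_1^{\even}$; hence its singular values coincide with those of $B_1^{\even}$.

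To absorb the still-unused $\tau_{2m+1}$ and supply the missing $\chi_{2m}$-distributed $\xi_{2m}$, I would apply the involution $\phi$ to the triple $(\xi_1^2,\tau_{2m}^2,\tau_{2m+1}^2)$ of independent $\chi^2$-variables of degrees $(1,2m,2m+1)$. Its output is another independent $(\chi_1^2,\chi_{2m}^2,\chi_{2m+1}^2)$-triple, which I relabel as $(\xi_1^2,\xi_{2m}^2,\xi_{2m+1}^2)$. The sum-invariance of $\phi$ leaves $\xi_{2m+1}$ untouched, so $R_1^{\even}$ is unchanged by the relabelling, while the explicit formula for $\phi$ produces the two key algebraic identities $\xi_1^2+\xi_{2m}^2=\tau_{2m+1}^2$ and $\xi_{2m}\,\xi_{2m+1}=\tau_{2m}\,\tau_{2m+1}$. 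At this point $\xi_1,\ldots,\xi_{2m+1}$ form the required family of independent $\chi_k$-distributed variables.

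The last step -- and the main technical point -- is to verify that $R_1^{\odd}$ built from these $\xi$'s has the same singular values as $B_1^{\odd}$. The key observation is the Cholesky-type identity $R_1^{\odd}(R_1^{\odd})'=(B_1^{\odd})'B_1^{\odd}$: both sides are $(m+1)\times(m+1)$ symmetric tridiagonal, and the identity reduces entry by entry to Lemma~\ref{lem:square}'s defining equations \eqref{eqn:RQ} (matching squared row-norms and successive-row overlaps of $R_1^{\odd}$ with squared column-norms and successive-column overlaps of $B_1^{\odd}$ for all indices $i\geq 2$) together with the two $\phi$-identities above for the $(1,1)$ and $(1,2)$ entries. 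Since $XX'$ and $X'X$ share eigenvalues for any square matrix $X$, this identity forces equality of the singular values of $R_1^{\odd}$ and $B_1^{\odd}$. The main obstacle is recognising that the correct identity to check is the ``mixed'' form $RR'=B'B$ rather than the more natural-looking $RR'=BB'$ or $R'R=B'B$ (each of which actually fails); once this identity is in place, the verification becomes routine bookkeeping.
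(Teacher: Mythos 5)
Your proof is correct, and it takes a genuinely different route from the paper's. The paper handles the odd-parity side by applying Lemma~\ref{lem:square} a second time, to the rescaled matrix $\hat B_1^{\odd}$ with its top-left entry set to $\tau_{2m+2}^{\odd}=0$, so that both $R$-factors drop out of a single structural argument (``$B=RQ$ with $Q$ row-orthogonal $\Rightarrow$ same singular values''), with no entry-by-entry verification; the extra variables $\xi_{2m}$ and $\xi_{2m+1}$ are then produced automatically by the $k=m$ recursion step inside the Lemma. You instead run the Lemma only once, on $(B_1^{\even})'$, and supply the missing $k=m$ step by hand as a single application of the involution $\phi$ to $(\xi_1^2,\tau_{2m}^2,\tau_{2m+1}^2)$; then you certify the odd-parity singular-value match via the direct Cholesky-type identity $R_1^{\odd}(R_1^{\odd})'=(B_1^{\odd})'B_1^{\odd}$, decomposed into the scalar equations \eqref{eqn:RQ} (entries with row and column index $\geq 2$) and the two $\phi$-identities (the $(1,1)$ and $(1,2)$ entries). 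Your observation that the ``mixed'' identity $RR'=B'B$ is the one that holds is exactly right---and in fact it is the same matrix identity the paper proves, since prepending a zero column to $(B_1^{\odd})'$ (as in the paper's reformulation) has the effect of turning $BB'$ into $(B_1^{\odd})'B_1^{\odd}$. So the two proofs establish the same equality; the paper derives it structurally from the $RQ$-decomposition, while you derive it by explicit bookkeeping, which makes the role of $\phi$ and of the individual equations more visible but costs a short computation. One small caveat: in your step~3, saying the sum-invariance of $\phi$ ``leaves $\xi_{2m+1}$ untouched'' is slightly loose---what actually happens is that the third output component of $\phi$ is $X+Y=\xi_1^2+\tau_{2m}^2$, which only \emph{coincides} with the previously defined $\xi_{2m+1}^2$ because of the Lemma's formula $\xi_{2m+1}=\sqrt{\xi_1^2+\tau_{2m}^2}$; worth stating that explicitly, but the conclusion is correct.
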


\begin{proof} Let $\tau_1,\ldots,\tau_{2m+1}$ be independent random variables, with $\tau_k$ distributed as $\chi_k$.
By transposing both matrices and prepending a zero column to the first one,
Corollary~\ref{cor:bidiagonal} shows that the singular values of the two matrices
\begin{gather*}
B_1^{\odd} = \frac{1}{\sqrt{2}}
    \begin{pmatrix}
    0 & \sqrt{2}\tau_{2m+1}\\
    &\tau_{2m} & \tau_{2m-1}  \\
    & &\ddots & \ddots  \\
    & & &\tau_2  & \tau_1
  \end{pmatrix}\\
\intertext{and}
B_1^{\even} = \frac{1}{\sqrt{2}}
    \begin{pmatrix}
    \tau_{2m} & \tau_{2m-1}  \\
    & \tau_{2m-2} & \tau_{2m-3} \\
    & &\ddots & \ddots  \\
    & & &\tau_2  & \tau_1
  \end{pmatrix}
\end{gather*}
are jointly distributed as $|\GOE_{2m+1}|$. Here, the singular values of $B_0^{\odd}$ correspond to
$\odd|\GOE_{2m+1}|$ and the singular values of $B_0^{\even}$ correspond to $\even|\GOE_{2m+1}|$, both drawn from the same
ensemble. If we apply the construction of Lemma~\ref{lem:square} simultaneously to $B_1^{\even}$ and to
\[
 \hat B_1^{\odd} = \frac{1}{\sqrt{2}}
    \begin{pmatrix}
    \tau_{2m+2}^{\odd} & \tau_{2m+1}\\
    &\tau_{2m} & \tau_{2m-1}  \\
    & &\ddots & \ddots  \\
    & & &\tau_2  & \tau_1
  \end{pmatrix},\qquad \tau_{2m+2}^{\odd} = 0,
 \]
 which is $B_1^{\odd}$ with its first row rescaled, we obtain the $R$-factors $R_1^{\even}$ and
\[
\hat R_1^{\odd} = 
\frac{1}{\sqrt{2}}\begin{pmatrix}
     \xi_{2m+3}^{\odd}  & \xi_{2m}  \\
    & \xi_{2m+1} & \xi_{2m-2} \\
    & &\ddots & \ddots  \\
    & & &\xi_5  & \xi_2 \\
    & & & & \xi_3
  \end{pmatrix}
\]
with one and the same set of variables $\xi_{1},\ldots,\xi_{2m+1}$
subject to the asserted properties and additionally
\[
\xi_{2m+3}^{\odd} = \sqrt{\xi_1^2+(\tau_{2m+2}^{\odd})^2} = \xi_1.
\]
By restoring the proper scaling of the first row, we thus get the
asserted form of $R_1^{\even}$ and $R_1^{\odd}$.
\end{proof}

\begin{remark} This theorem implies that, for odd order $n=2m+1$, the product
  $\det R_1^{\odd}$ of the odd singular values of $\GOE_n$ and the product
  $\det R_1^{\even}$ of the even ones are related by
  \[
  \det R_1^{\odd} = \xi_1 \det R_1^{\even},
  \]
  where $\xi_1$ is a random variable distributed as $\chi_1$ which is
  independent of $\even |\GOE_n|$. This is nothing but (\ref{eqn:t_prod}),
  recalling that by the proof of
  Theorem~\ref{thm:independent} the variable $r_{m+1}$ is
  distributed as $\chi_1$.
\end{remark}

As an immediate consequence of the preceding theorems, stated in the following corollary, the magnitude of the determinant of the GOE can be
expressed as a product of independent random variables. Even though Tao and Vu
  speculated that such a representation does not seem to be possible \cite[p.~78]{TaoVu},
a precursor of this
  result was recognized implicitly by Delannay and Le~Ca\"er
  who noted in \cite[p.~1531]{Delannay-LeCaer} that the Meijer $G$-function 
  they used to describe the distribution of the determinant
  when $n$ is odd could be sampled as a product of independent
  gamma-distributed random variables.  They did not, however, have any
  interpretation for these variables in terms of the underlying
  ensemble, nor did they recognize the possibility of sampling when
  $n$ is even.

\begin{corollary}\label{cor:detfact} Let $G_n$ be drawn from the $\GOE$ of
  order $n=2m+\mu$ with parity $\mu=0,1$, $\hat{m}=m+\mu$. Then the
  determinant of $M_n = \sqrt{2} G_n$ factors into independent random
  variables of the form
\begin{equation}\label{eqn:abs-det}
|\!\det M_{n}| = \eta_n^{(1)} \cdot \xi_3^2 \cdot \xi_5^2 \,\cdots\, \xi_{2\hat{m}-1}^2,
\end{equation}
with 
\[
 \eta_n^{(1)} = \xi_1\cdot \sqrt{\xi_1^2+2\xi_{n}^2} \quad (\mu=0),\qquad\quad \eta_n^{(1)}= \sqrt{2}\, \xi_1 \quad (\mu=1).
\]
Here, the $\xi_k$ are mutually independent random variables distributed as
$\chi_k$.
\end{corollary}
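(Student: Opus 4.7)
The plan is to read off the factorization directly from the square bidiagonal models of Theorems~\ref{thm:square-bidiagonal-even} and \ref{thm:square-bidiagonal-odd}. The key observation is that for the symmetric matrix $G_n$ drawn from $\GOE_n$, the magnitudes of the eigenvalues are exactly the singular values, so
\[
|\!\det M_n| = 2^{n/2}\,|\!\det G_n| = 2^{n/2}\prod_{k=1}^{n}\sigma_k(G_n) = 2^{n/2}\Bigl(\prod_{j}\sigma_j^{\odd}\Bigr)\Bigl(\prod_{j}\sigma_j^{\even}\Bigr),
\]
where the last factorization is the decomposition \eqref{eqn:decomp} into odd- and even-location singular values. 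By Theorems~\ref{thm:square-bidiagonal-even}/\ref{thm:square-bidiagonal-odd}, these two products equal $|\!\det R_\mu^{\odd}|$ and $|\!\det R_\mu^{\even}|$ respectively, on a joint distribution built from the \emph{same} independent sample $\xi_1,\ldots,\xi_{2\hat{m}-\mu}$ of $\chi_k$-variables ($\xi_{2m}$ only appears when $\mu=0$, and $\xi_{2m+1}$ only when $\mu=1$).

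The second step is purely bookkeeping. Since each $R_\mu^{\odd}$ and $R_\mu^{\even}$ is \emph{upper bidiagonal} and \emph{square}, its determinant is simply the product of its diagonal entries, all of which are non-negative so no sign issues arise. For $\mu=0$ (with $n=2m$, $\hat{m}=m$), the diagonals give
\[
|\!\det R_0^{\even}| = 2^{-m/2}\,\xi_1\,\xi_3\,\xi_5\cdots\xi_{2m-1},\qquad
|\!\det R_0^{\odd}| = 2^{-m/2}\sqrt{\xi_1^2+2\xi_{2m}^2}\,\xi_3\,\xi_5\cdots\xi_{2m-1},
\]
and multiplying by $2^{n/2}=2^{m}$ collapses the powers of $2$, yielding the stated form with $\eta_n^{(1)}=\xi_1\sqrt{\xi_1^2+2\xi_{2m}^2}$. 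For $\mu=1$ (with $n=2m+1$, $\hat{m}=m+1$) the extra factor $\sqrt{2}$ on the $(1,1)$ entry of $R_1^{\odd}$ contributes the stray $\sqrt{2}$ in $\eta_n^{(1)}=\sqrt{2}\,\xi_1$, while the remaining diagonal entries pair up as $\xi_3^2,\xi_5^2,\ldots,\xi_{2m+1}^2 = \xi_{2\hat{m}-1}^2$.

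The only point that requires any attention is the \emph{independence} of the factors appearing in \eqref{eqn:abs-det}. This is precisely what Lemma~\ref{lem:square} guarantees: the very same sample $\xi_1,\ldots,\xi_{2\hat{m}-\mu}$ is used to build both $R_\mu^{\odd}$ and $R_\mu^{\even}$, and within this sample the variables are mutually independent with the prescribed $\chi_k$-marginals. Consequently, the common odd-indexed variables $\xi_3,\xi_5,\ldots,\xi_{2\hat{m}-1}$ (each contributing squared) and the variables $\xi_1$ (and $\xi_n$ when $\mu=0$) entering $\eta_n^{(1)}$ are jointly independent, which completes the identification. No obstacle of substance remains; the work has been done in the preceding theorems, and the corollary is essentially a direct readout of diagonal products.
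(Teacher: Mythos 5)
Your proposal is correct and follows the same route as the paper: read off $|\!\det M_n|$ as the product of singular values, apply Theorems~\ref{thm:square-bidiagonal-even}/\ref{thm:square-bidiagonal-odd} to identify it with $\det(\sqrt{2}R_\mu^{\even})\cdot\det(\sqrt{2}R_\mu^{\odd})$, and multiply out the diagonals. Only the parenthetical ``$\xi_{2m}$ only appears when $\mu=0$'' is imprecise (for $\mu=1$ the variable $\xi_{2m}$ sits in the superdiagonal of $R_1^{\odd}$ and so is present in the model but absent from the determinant), but this does not affect the argument.
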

\begin{proof} The assertion follows from the observation that
  $|\!\det{}M_n|$ is the product of the singular values of $M_n$ and
  therefore, by Theorem~\ref{thm:square-bidiagonal-even}
  and~\ref{thm:square-bidiagonal-odd}, distributed as
\[
\det(\sqrt{2} R_\mu^{\even}) \cdot \det(\sqrt{2} R_\mu^{\odd}).
\]
Multiplication of the diagonal terms of the bidiagonal factors finishes the proof.
\end{proof}

In retrospect, once we know that  $|\!\det\GOE_n|$ is distributed as
a product of $\hat{m}$ independent random variables, all the
factors can be readily identified in the Mellin transforms computed
by Delannay and Le~Ca\"er in \cite[Eqs.~(26/41))]{Delannay-LeCaer}, although, for even $n=2m$, the Mellin
transform of the factor
\[
\eta_{2m}^{(1)} = \xi_1\sqrt{\xi_1^2+2\xi_{2m}^2}
\]
into the expression
\begin{equation}\label{eqn:fancy-mellin}
2^{3(s-1)/2} \,
\frac{\Gamma\left(\tfrac{s}{2}\right)\Gamma\left(s+m-\tfrac{1}{2} \right)}{\Gamma\left(\tfrac12 \right)\Gamma\left(\frac{s}{2}+m \right)} 
\; {}_2F_1\!\left(\frac{s}{2},\frac{1-s}{2};\frac{s}{2}+m;\frac{1}{2}\right)
\end{equation}
with the hypergeometric function ${}_2 F_1$ may not be familiar to most observers.  This aspect of their paper has been
  missed by several commentators, with Mehta, in \cite[\S26.6]{Mehta},
  omitting their expression for even $n=2m$ since the inverse Mellin
  transform of \eqref{eqn:fancy-mellin} cannot be readily written down.   
\begin{remark}
  In the case of odd order $n=2m+1$, the density of $\det\GOE_{2m+1}$ is
  necessarily odd, since the eigenvalue density \eqref{eqn:goedensity} is even, but
  $\det(G)=-\det(-G)$.  It follows that in this case the sign of the
  determinant is statistically independent of its magnitude, and we can obtain the
  distribution of the determinant by replacing $\xi_1$ by a
  standard normal variable.  No corresponding result is available for
  even order $n=2m$,
    although the factored presentation of the odd moments of
    $\det\GOE_{2m}$ in \cite[Eq.~(23)]{Andrews-Goulden-Jackson}
    suggests that the distribution of the determinant should involve
    many of the same factors.
\end{remark}


\section{Central Limit Theorem for the Determinant}\label{section:CLT}

Delannay and Le~Ca\"er used an explicit computation of the Mellin
transform of the even part of the distribution of $\det{\GOE_n}$ to
derive the cumulants of the potential $V=\log\abs{\det{\GOE_n}}$, and
to show that $V$ is asymptotically Gaussian
\cite[Section~III]{Delannay-LeCaer}.  Tao and Vu extended 
this log-normality to determinants of a wider class of
Wigner matrices, and provided an alternate proof in the case of
Gaussian matrices in \cite{TaoVu}:  based on analyzing tridiagonal sparse models for the \GOE{}
and \GUE{} eigenvalues, they found a way to approximate the log-determinant as a sum of {\em weakly dependent} terms,
which then yields the asymptotic log-normality by stochastic calculus and the martingale central limit theorem.
In this section we present yet another, much simpler proof based on
the factorization of the magnitude of the determinant into independent
random variables. In particular, our proof elucidates the difference
between the GOE and the GUE in the scaling of the central limit
theorem.

We start by recalling that, parallel to the factorization given in
Corollary~\ref{cor:detfact} for the GOE ($\beta=1$), Edelman and
La~Croix \cite[Thm.~2]{EL-GUEsing} obtained a factorization for the
GUE ($\beta=2$): with $G_n$ drawn from the $\GUE$ of order $n$, the
determinant of $M_n = \sqrt{2} G_n$ factors into independent random
variables of the form
\begin{equation}\label{eqn:GUE-det}
|\!\det M_{n}| = \eta_n^{(2)} \cdot \xi_3 \tilde\xi_3 \cdot \xi_5 \tilde\xi_5 \,\cdots \, \xi_{2\hat{m}-1} \tilde\xi_{2\hat{m}-1} 
\end{equation}
with
\[
\eta_n^{(2)} = \xi_1 \xi_{n+1} \quad (\mu=0),\qquad  \eta_n^{(2)} = \xi_1 \quad (\mu=1).
\]
Here $\xi_1,\ldots,\xi_n,\tilde\xi_3,\ldots,\tilde\xi_{2\hat{m}-1}$ are mutually independent random variables with both $\xi_{k}$
and $\tilde\xi_k$ being distributed as $\chi_k$. Note that, except for the (asymptotically irrelevant) change in the 
factor $\eta_n^{(\beta)}$, the transition from the GOE to the GUE
just amounts for splitting the terms $\xi_k^2$ into the products $\xi_k\tilde\xi_k$ of independent factors. It is precisely
this split which causes the appearance of $\beta$ in the denominator of the central limit theorem when written in the
following form.

\begin{theorem}[Tao and Vu \protect{\cite[Thm.~4]{TaoVu}}] With the notation as above there holds, as $n\to \infty$, the central limit theorem
\begin{equation}\label{eqn:det-CLT}
\frac{\log|\!\det M_n| - \frac{1}{2}\log n!  + \frac{1}{4} \log n}{\sqrt{\frac{1}{\beta}\log n}} \;\overset{\rm d}{\to}\; N(0,1) \qquad(\beta=1,2),
\end{equation}
where $\overset{\rm d}{\to}$ denotes convergence in distribution.
\end{theorem}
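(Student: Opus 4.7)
The plan is to apply the Lyapunov central limit theorem to the logarithm of the factorizations of $|\!\det M_n|$ given in Corollary~\ref{cor:detfact} (GOE, $\beta=1$) and \eqref{eqn:GUE-det} (GUE, $\beta=2$). Writing $\hat m = \lceil n/2\rceil$, both cases give
\[
\log|\!\det M_n| = \log\eta_n^{(\beta)} + \sum_{k=1}^{\hat m -1} L_k^{(\beta)},
\]
where $L_k^{(1)} = 2\log\xi_{2k+1}$ for the GOE and $L_k^{(2)} = \log\xi_{2k+1} + \log\tilde\xi_{2k+1}$ for the GUE; in either case the $L_k^{(\beta)}$ are independent across $k$. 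Throughout I would use only the standard identities $E\log\chi_k = \tfrac12(\psi(k/2)+\log 2)$ and $\operatorname{Var}\log\chi_k = \tfrac14\psi'(k/2)$, together with the large-$x$ asymptotics $\psi(x) = \log x + O(x^{-1})$ and $\psi'(x) = x^{-1}+O(x^{-2})$.

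The first step is to dispose of the boundary term $\log\eta_n^{(\beta)}$. In the odd-order cases it is $O_p(1)$ outright; in the even-order cases, $\xi_n$ concentrates at $\sqrt{n}$ so that $\log\eta_n^{(\beta)} = \tfrac12\log n + O_p(1)$. The deterministic $\tfrac12\log n$ piece gets absorbed into the centering, while the $O_p(1)$ fluctuations are asymptotically irrelevant once divided by $\sqrt{\log n}$.

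Next, I would compute the variance of the main sum. Here $\operatorname{Var} L_k^{(1)} = \psi'(k+\tfrac12)$ and $\operatorname{Var} L_k^{(2)} = \tfrac12\psi'(k+\tfrac12)$; summing the trigamma asymptotic yields $\sum_{k=1}^{\hat m-1}\operatorname{Var} L_k^{(\beta)} = \tfrac{1}{\beta}\log n + O(1)$. The factor $\beta$ in the CLT denominator appears structurally: splitting a squared factor $\xi_k^2$ of the GOE determinant into a product $\xi_k\tilde\xi_k$ of independent factors in the GUE determinant exactly halves the variance of the corresponding log. For the mean, combining $E L_k^{(\beta)} = \log(2k+1) + O(k^{-2})$ with Stirling and the boundary contribution produces $E\log|\!\det M_n| = \tfrac12\log n! - \tfrac14\log n + O(1)$; the subtracted $\tfrac14\log n$ precisely cancels the $\tfrac14\log(2\pi n)$ piece that Stirling inserts into $\tfrac12\log n!$, leaving only an $O(1)$ remainder.

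Finally, the Lyapunov condition is straightforward: since $\log\chi_k$ is asymptotically normal with variance of order $1/k$, its centered third absolute moment is $O(k^{-3/2})$, so $\sum_{k=1}^{\hat m-1} E|L_k^{(\beta)}-EL_k^{(\beta)}|^3 = O(1)$, which divided by $(\log n)^{3/2}$ tends to zero. I expect the most delicate step to be the bookkeeping for the centering---verifying that the $\log n$-scale corrections from Stirling applied to $n!$, from $\sum_k \psi(k+\tfrac12)$ expanded against $\log k$, and from the boundary $\tfrac12\log n$ in the even-order case all combine to leave an error of only constant order, which is then rendered negligible by the normalizer $\sqrt{\log n}\to\infty$.
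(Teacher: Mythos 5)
Your proposal is correct and follows essentially the same strategy as the paper: split $\log|\!\det M_n|$ into the boundary term $\log\eta_n^{(\beta)}$ and a sum of independent $\log\chi$ variables, establish a Lyapunov CLT for the sum with the $\beta$-dependence entering through the variance, and dispose of the boundary term via Slutsky. The paper differs only cosmetically, citing Edelman and La~Croix's asymptotics for the $\beta=2$ case and a fourth-order Lyapunov condition before transferring to $\beta=1$ via the mean/variance relations \eqref{eqn:meanvar}, whereas you compute the digamma/trigamma asymptotics directly and use a third-order Lyapunov condition.
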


To prove this theorem for $\beta=1$ and $\beta=2$ in parallel, we split
\[
\log |\!\det M_n | = Y_n^{(\beta)} + Z_n^{(\beta)}\qquad (\beta=1,2)
\]
into the random variables $Y_n^{(\beta)} = \log \eta_n^{(\beta)}$ and $Z_n^{(\beta)}$ defined by
\begin{align*}
Z_n^{(1)} &=  \quad\;\;\;\; 2\log \xi_3  \quad\;\;\;\, + \quad\;\;\;\; 2\log \xi_5 \quad\;\;\;\,  +\quad \cdots\quad + \quad\;\;\;\,  2\log \xi_{2\hat{m}-1},\\*[2mm]
Z_n^{(2)} &=  \left(\log \xi_3 + \log \tilde \xi_3\right) + \left(\log \xi_5+ \log \tilde \xi_5\right) + \quad\cdots\quad + \left(\log \xi_{2\hat{m}-1} + \log \tilde\xi_{2\hat{m}-1}\right),
\end{align*}
We immediately observe the relations
\begin{equation}\label{eqn:meanvar}
\E{Z_n^{(1)}} = \E{Z_n^{(2)}}, \qquad \Var{Z_n^{(1)}} = 2\Var{Z_n^{(2)}}.
\end{equation}
Note that the factor of two between the variances is caused,  in the transition from GOE to GUE, by the above mentioned split of  $\xi_k^2$ into the product $\xi_k\tilde\xi_k$.

Now, while proving the central limit theorem in the $\beta=2$ case, Edelman and La Croix \cite[Cor.~2]{EL-GUEsing} obtained, in passing,
the following result. 

\begin{lemma} The random variable $Z_n^{(\beta)}$ satisfies, as $n\to\infty$, a central limit theorem of the form
\begin{equation}\label{eqn:Z_CLT}
\tilde Z_n^{(\beta)} = \frac{Z_n^{(\beta)} - \frac{1}{2}\log n! + \frac{1-\mu}{2}\log n + \frac{1}{4} \log n}{\sqrt{\frac{1}{\beta}\log n}}
\;\overset{\rm d}{\to}\; N(0,1)
\qquad (\beta=1,2).
\end{equation}
\end{lemma}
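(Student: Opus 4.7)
My plan is to treat $Z_n^{(\beta)}$ as a sum of independent log-gamma type variables, extract its mean and variance from the asymptotics of the digamma and trigamma functions, and conclude with a Lyapunov CLT.

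Since $\xi_k^2\sim\chi_k^2$ has the $\mathrm{Gamma}(k/2,2)$ law, the standard log-gamma identities give
\[
\E{\log\xi_k^2}=\psi(k/2)+\log 2,\qquad \Var{\log\xi_k^2}=\psi'(k/2).
\]
Writing $Z_n^{(1)}=\sum_{j=1}^{\hat m-1}\log\xi_{2j+1}^2$ and $Z_n^{(2)}=\tfrac12\sum_{j=1}^{\hat m-1}(\log\xi_{2j+1}^2+\log\tilde\xi_{2j+1}^2)$, one immediately recovers \eqref{eqn:meanvar}, and the problem reduces to evaluating
\[
\E{Z_n^{(1)}}=\sum_{j=1}^{\hat m-1}\bigl(\psi(j+\tfrac12)+\log 2\bigr),\qquad \Var{Z_n^{(1)}}=\sum_{j=1}^{\hat m-1}\psi'(j+\tfrac12).
\]

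For the variance, the expansion $\psi'(x)=1/x+O(x^{-2})$ gives $\Var{Z_n^{(1)}}=2\sum_{j=1}^{\hat m-1}\tfrac{1}{2j+1}+O(1)=\log\hat m+O(1)=\log n+O(1)$, hence $\Var{Z_n^{(\beta)}}=\tfrac{1}{\beta}\log n+O(1)$. For the mean, using $\psi(x)=\log x-1/(2x)+O(x^{-2})$ together with the identity $\prod_{j=1}^{\hat m-1}(2j+1)=(2\hat m-1)!/\bigl(2^{\hat m-1}(\hat m-1)!\bigr)$ and Stirling's formula, split on the parity $\mu$, would yield
\[
\E{Z_n^{(\beta)}}=\tfrac12\log n!-\tfrac{1-\mu}{2}\log n-\tfrac14\log n+O(1),
\]
so that the numerator in \eqref{eqn:Z_CLT} differs from $Z_n^{(\beta)}-\E{Z_n^{(\beta)}}$ by $O(1)$, which is negligible after dividing by $\sqrt{\tfrac{1}{\beta}\log n}$.

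The CLT itself is a textbook Lyapunov argument. For $k\geq 3$, $\log\chi_k^2$ has finite moments of every order, and (parallel to $\Var{\log\chi_k^2}=\psi'(k/2)=O(1/k)$) its fourth central moment is $O(1/k^2)$. Hence the sum of fourth central moments of the summands of $Z_n^{(\beta)}$ is $O(1)$ while $s_n^2=\Var{Z_n^{(\beta)}}\to\infty$, so Lyapunov's condition with exponent $2+\delta=4$ is satisfied, giving the claimed convergence in distribution to $N(0,1)$. The delicate step is the bookkeeping in the mean: matching the $-\tfrac{1-\mu}{2}\log n-\tfrac14\log n$ correction against $\tfrac12\log n!$ requires Stirling to $O(1)$ precision and a careful case distinction on the parity $\mu$; by comparison, the variance asymptotics and the CLT itself are routine.
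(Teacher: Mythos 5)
Your proposal is correct and takes essentially the same route as the paper, which cites Edelman and La Croix for the explicit mean/variance asymptotics of $\log\chi$-distributed variables and the Lyapunov condition of order four, then applies the Lindeberg--Feller CLT; you simply carry out the digamma/trigamma calculations rather than citing them, and you treat $\beta=1$ and $\beta=2$ directly in parallel where the paper derives $\beta=1$ from $\beta=2$ via \eqref{eqn:meanvar}. One small remark: the formula for $\E{Z_n^{(2)}}$ as displayed in the paper's proof appears to double-count a $\tfrac{1-\mu}{2}\log n$ term when both the factorial identity and the $-\tfrac{1-\mu}{2}\log(2\hat m-1)$ term are expanded, but your statement of the mean (and hence the centering in \eqref{eqn:Z_CLT}) is the correct one.
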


\begin{proof} The proof of \cite[Cor.~2]{EL-GUEsing} proceeds, first, by establishing  asymptotic expansions
based on explicit calculations 
of the mean and variance of $\log\chi$-distributed variables, namely,
\begin{align*}
\E{Z_n^{(2)}} &= \frac{1}{2}\log (2\hat{m}-1)! \,-\, \frac{1-\mu}{2}\log (2\hat{m}-1) \,-\, \frac{1}{4} \log (2\hat{m}-1)  \,+\, O(1),\\*[2mm] 
\Var{Z_n^{(2)}} &= \frac{1}{2}\log (2\hat{m}-1)  \,+\, O(1),
\end{align*}
and, next, by showing that $Z_n^{(2)}$ satisfies a Lyapunov condition of order four. Hence, the Lindeberg--Feller central limit theorem can then be applied to $Z_n^{(2)}$ and gives, by noting that
\[
\log(2\hat{m} - 1)! = \log n! \,-\, (1- \mu)\log n, \qquad \log (2\hat{m}-1) = \log n \,+\, O(1),
\]
the asserted limit \eqref{eqn:Z_CLT}. By realizing that the sums $Z_n^{(1)}$ and $Z_n^{(2)}$ basically share the same Lyapunov condition,  the central limit
theorem for $Z_n^{(1)}$ can be induced from that of  $Z_n^{(2)}$ by means of \eqref{eqn:meanvar}.
\end{proof}

The difference between the central limit theorems of $\log |\!\det M_n|$ and of $Z_n^{(\beta)}$ enjoys the following strong convergence result.

\begin{lemma} The random variable $Y_n^{(\beta)}$ satisfies, as $n \to\infty$,
\begin{equation}\label{eqn:Y-as}
\tilde Y_n^{(\beta)} = \frac{Y_n^{(\beta)} - \frac{1-\mu}{2}\log n}{\sqrt{\frac{1}{\beta}\log n}} \;\overset{\rm a.s.}{\longrightarrow}\; 0 \qquad (\mu=0,1),
\end{equation}
where $\overset{\rm a.s.}{\longrightarrow}$ denotes almost sure convergence.
\end{lemma}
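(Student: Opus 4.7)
The plan is to handle the two parities $\mu=0$ and $\mu=1$ separately, in both cases reducing the claim to the almost-sure equivalent $\xi_k \sim \sqrt{k}$ as $k\to\infty$. This equivalent follows from the strong law of large numbers by realizing all $\chi_k$-variables on a common probability space via $\xi_k^2 = Z_1^2+\cdots+Z_k^2$ for a single i.i.d.\ standard-normal sequence $(Z_j)$; under this coupling one has $\log\xi_k - \tfrac{1}{2}\log k \to 0$ a.s., and in particular $\xi_1$ is an a.s.-finite $\chi_1$ variable that does not depend on $n$.

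For $\mu=1$ the centering term $\tfrac{1-\mu}{2}\log n$ vanishes, and $\eta_n^{(\beta)}$ is, up to a multiplicative constant, just $\xi_1$. Thus $Y_n^{(\beta)} = O(1)$ a.s., and the claim follows from $\sqrt{\log n/\beta}\to\infty$.

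For $\mu=0$ and $\beta=2$, $Y_n^{(2)} = \log\xi_1 + \log\xi_{n+1}$ and the SLLN gives $\log\xi_{n+1} = \tfrac{1}{2}\log n + o(1)$ a.s. For $\mu=0$ and $\beta=1$, $Y_n^{(1)} = \log\xi_1 + \tfrac{1}{2}\log(\xi_1^2+2\xi_n^2)$; pulling out $2\xi_n^2$ inside the logarithm gives
\[
\tfrac{1}{2}\log(\xi_1^2+2\xi_n^2) = \tfrac{1}{2}\log 2 + \log\xi_n + \tfrac{1}{2}\log\!\bigl(1 + \xi_1^2/(2\xi_n^2)\bigr) = \tfrac{1}{2}\log 2 + \tfrac{1}{2}\log n + o(1)
\]
almost surely, using $\xi_n\to\infty$ a.s.\ in the correction term and the SLLN asymptotic for $\log\xi_n$ in the leading term. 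In both $\beta$ cases, $Y_n^{(\beta)} - \tfrac{1}{2}\log n = O(1)$ a.s., and dividing by the divergent $\sqrt{\log n/\beta}$ yields $\tilde Y_n^{(\beta)} \to 0$ a.s.

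The argument is essentially routine once the coupling is fixed: $Y_n^{(\beta)}-\tfrac{1-\mu}{2}\log n$ turns out to be an a.s.\ bounded sequence in $n$, while the normalizing denominator diverges. No martingale or Lyapunov input is needed; the only nontrivial ingredient is the SLLN consequence $\xi_n/\sqrt{n}\to 1$ a.s. The one point to be careful about, which is as close as this lemma comes to an obstacle, is the bookkeeping of the coupling that makes ``a.s.''\ meaningful across all $n$ simultaneously.
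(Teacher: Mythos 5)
Your proof is correct and follows essentially the same route as the paper's: for $\mu=1$ the numerator is $n$-independent so the claim is immediate, and for $\mu=0$ the strong law of large numbers applied to $\xi_n^2/n$ (and the resulting $\log\xi_n - \tfrac12\log n\to 0$ a.s.) does all the work. You are somewhat more explicit than the paper on two points: you spell out the coupling $\xi_k^2 = Z_1^2+\cdots+Z_k^2$ that puts all $n$ on a single probability space (the paper invokes the SLLN with this implicitly understood), and you write out the $\beta=2$, $\mu=0$ sub-case $Y_n^{(2)}=\log\xi_1+\log\xi_{n+1}$ separately; both refinements are sound and make the argument airtight, but they do not change the substance. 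No gap.
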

\begin{proof} The case $\mu=1$ is trivial, since in that case $Y_n^{(\beta)}$ is independent of $n$.
Applied to a sum of squares of independent standard Gaussians, the strong law of large numbers gives, as $n\to\infty$,
\[
n^{-1} \xi_n^2 \overset{\rm a.s.}{\longrightarrow} \E{\xi_1^2} = 1,\quad\text{and, hence,}\quad
n^{-1} (\xi_1^2 + 2 \xi_n^2)  \overset{\rm a.s.}{\longrightarrow} 2.
\]
Taking the logarithm gives
\[
\log \xi_n - \frac{1}{2}\log n  \overset{\rm a.s.}{\longrightarrow} 0,\qquad 
\log \sqrt{\xi_1^2+2\xi_n^2} - \frac{1}{2}\log n  \overset{\rm a.s.}{\longrightarrow} \frac{1}{2}\log 2,
\]
which implies the assertion for $\mu=0$. 
\end{proof}

Now, adding \eqref{eqn:Z_CLT} and \eqref{eqn:Y-as} gives, by Slutsky's theorem, 
\[
\frac{\log|\!\det M_n| - \frac{1}{2}\log n!  + \frac{1}{4} \log n}{\sqrt{\frac{1}{\beta}\log n}} = \tilde Y_n^{(\beta)} + \tilde Z_n^{(\beta)} 
\;\overset{\rm d}{\to}\; N(0,1) \qquad(\beta=1,2),
\]
which finishes the proof of the central limit theorem \eqref{eqn:det-CLT}.



\section{Integrating Out the Odd or Even Singular Values}\label{section:evensingularvalues}

Here, we present another proof of Theorem~\ref{thm:main1}. If we are
interested only in the distribution of the even singular values, then
it is possible to proceed from the joint probability
density~(\ref{eqn:singular_density}) by integrating out the odd
ones. While not exposing any additional structure, such as in
Theorem~\ref{thm:independent}, this approach is conceptually more
straight forward, and offers the advantage that it can also be used to
establish the determinantal formula (\ref{eqn:odd_density}) for the
probability density of the odd singular values. This is of interest in
its own right, since we constructed separate sparse random matrix
models for the odd singular values in Corollary~\ref{cor:bidiagonal}
and in Theorems~\ref{thm:square-bidiagonal-even} and
\ref{thm:square-bidiagonal-odd}. Moreover, the technique extends to
the symmetric Jacobi and to the Cauchy ensembles \cite{1503.07383}.

\subsection{Integrating out the odd singular values}
  
Recalling (\ref{eqn:dim}), we rewrite the expression (\ref{eqn:singular_density}) of the joint
density in the form
 \[
 q(s;t) = c_n 2^n n! \cdot 
 g_\mu(s_1,\ldots,s_m) \cdot g_{1-\mu}(t_1,\ldots,t_{\hat{m}})
\]
with functions
\begin{equation}\label{eqn:gmu}
  g_a(z_1,\ldots,z_m) = \prod_{k=1}^m z_k^a e^{-z_k^2/2} \cdot \Delta(z_m^2,\ldots,z_1^2).
\end{equation}
Corollary~\ref{cor:2} below shows that integrating out the odd singular
values $t$ subject to the interlacing~\eqref{eqn:interlacing} gives
the following marginal density of the even singular values with
$\delta_\mu = (\pi/2)^{\mu/2}$:
\begin{multline}\label{eqn:even_density}
q_{\text{even}}(s_1,\ldots,s_m) = \delta_\mu c_n 2^n n! \cdot g_\mu(s_1,\ldots,s_m)^2\\*[2mm]
= \delta_\mu c_n 2^n n! \cdot\prod_{k=1}^m s_k^{2\mu} e^{-s_k^2} \cdot \Delta(s_m^2,\ldots,s_1^2)^2.
\end{multline}
Since the last expression is the joint density of the
anti-\GUE{} of order $n$, see \cite[Sect.~13.1]{Mehta} or \cite[Ex. 1.3.5(iv)]{ForresterBook}, this is nothing but Theorem~\ref{thm:main1} spelled out in terms of densities.

The integration is based on the following lemma and its first Corollary~\ref{cor:2}.

\begin{lemma}\label{lem:1} Let
\[
e_\kappa^{(n)}(x) = 
\begin{pmatrix}
x^{\kappa} e^{-x^2/2}\\
x^{\kappa +2}e^{-x^2/2}\\
\vdots\\
x^{\kappa +2n-2}e^{-x^2/2}
\end{pmatrix}
  \in \R^n \qquad (\kappa = -1,0,1),
\]
with the understanding that, instead of $x^{-1} e^{-x^2/2}$, the first entry of $e_{-1}^{(n)}(x)$ is the expression
\[
\eta_{-1}(x) = - \sqrt{\frac{\pi}2} \erf\left(\frac{x}{\sqrt{2}}\right).
\]
Then, for $\kappa=0,1$, there holds the integration formula
\begin{multline*}
\int_{x_1}^{x_2}d \xi_1 \cdots \int_{x_n}^{x_{n+1}}d \xi_n \;\det\left(e_\kappa ^{(n)}(\xi_1) \; \cdots \; e_\kappa^{(n)}(\xi_n)\right)\\*[2mm]
 = 
\det
\begin{pmatrix}
e_{\kappa-1}^{(n)}(x_1) & \cdots & e_{\kappa-1}^{(n)}(x_{n+1}) \\*[1mm]
1 & \cdots & 1
\end{pmatrix}.
\end{multline*}
\end{lemma}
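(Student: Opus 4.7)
The plan is to reduce the iterated integral to the fundamental theorem of calculus via triangular row operations on the integrand determinant. The obstacle to a naive ``antidifferentiate each entry'' argument is that $x^{\kappa+2j-2}e^{-x^2/2}$ is not the derivative of $x^{\kappa+2j-3}e^{-x^2/2}$ alone: one gets an extra lower-index term. Being linear in a neighboring entry, this correction is exactly of the form that can be absorbed by a unipotent row operation, after which the determinant is left invariant.

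Concretely, I would set $G(x) = -e_{\kappa-1}^{(n)}(x)$ and differentiate termwise to find
\[
G'(x)_j = (e_\kappa^{(n)}(x))_j - (\kappa+2j-3)(e_\kappa^{(n)}(x))_{j-1} \quad (j\geq 2),\qquad G'(x)_1 = (e_\kappa^{(n)}(x))_1,
\]
the bottom row relying on $-\eta_{-1}'(x) = e^{-x^2/2}$ in the case $\kappa = 0$, which is the very reason for the special definition of $\eta_{-1}$. Thus $G'(x) = L\,e_\kappa^{(n)}(x)$ for a lower-bidiagonal matrix $L$ with unit diagonal, hence $\det L = 1$; so replacing column $e_\kappa^{(n)}(\xi_i)$ by $G'(\xi_i)$ in the integrand is an invariance. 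The Leibniz expansion then separates the $n$ one-variable integrals, yielding
\[
\det\bigl(G(x_{i+1}) - G(x_i)\bigr)_{i=1}^n = (-1)^n \det\bigl(e_{\kappa-1}^{(n)}(x_{i+1}) - e_{\kappa-1}^{(n)}(x_i)\bigr)_{i=1}^n
\]
for the original integral.

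To close the loop, I would operate on the stated right-hand side by subtracting from each column (other than the first) the column immediately to its left, working from right to left so each subtraction uses the original columns. This reduces the bottom row to $(1,0,\ldots,0)$ and turns the top $n\times n$ minor in columns $2,\ldots,n+1$ into exactly the matrix of differences $e_{\kappa-1}^{(n)}(x_{i+1}) - e_{\kappa-1}^{(n)}(x_i)$; cofactor expansion along the bottom row then contributes the sign $(-1)^{n+1+1}=(-1)^n$ and matches the displayed expression. The main obstacle will be careful sign bookkeeping --- both the $(-1)^n$ arising from negating $e_{\kappa-1}$ and the cofactor sign from the expansion --- together with verifying that the $j=1$ row behaves correctly in the $\kappa=0$ case through the tailored definition of $\eta_{-1}$.
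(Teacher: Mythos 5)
Your proof is correct and follows essentially the same route as the paper: establish a triangular linear relation tying $e_\kappa$ to $e_{\kappa-1}$ (so the integrand can be replaced by an exact derivative at no determinantal cost), separate the iterated integral by multilinearity into column differences, and match the stated right-hand side by column operations. The only cosmetic difference is that you differentiate $e_{\kappa-1}$ to obtain a lower \emph{bidiagonal} unipotent relation, whereas the paper antidifferentiates $e_\kappa$ via the three-term recurrence, giving a lower \emph{triangular} matrix with $-1$ on the diagonal; your version is marginally tidier to verify, but the sign bookkeeping ($(-1)^n$ from $G=-e_{\kappa-1}$ versus $\det L_\kappa=(-1)^n$) and the final column reduction are the same idea in both.
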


\begin{proof}
Integration by parts yields the three-term recurrence of antiderivatives
\begin{align*}
\int^x  e^{-\xi^2/2}\,d\xi &= - \eta_{-1}(x),\\*[2mm]
 \int^x \xi^{k+1} e^{-\xi^2/2}\,d\xi &= -x^k e^{-x^2/2} +k \int^x \xi^{k-1}   e^{-\xi^2/2}\,d\xi \qquad (k=0,1,2,\ldots),
\end{align*}
and, hence, by simplifying notation to $e_\kappa(x)=e_\kappa^{(n)}(x)$,
\[
\int^x e_\kappa(\xi) \,d\xi = L_\kappa e_{\kappa-1} (x) \qquad (\kappa=0,1)
\]
with a \emph{lower triangular} matrix $L_\kappa \in {\R^{n\times n}}$ having $-1$ all along its main diagonal.
We thus
calculate
\begin{multline*}
\int_{x_1}^{x_2}d \xi_1 \cdots \int_{x_n}^{x_{n+1}}d \xi_n \;\det\left(e_\kappa(\xi_1) \; \cdots \; e_\kappa(\xi_n)\right)\\*[4mm]
 = \det\left(\int_{x_1}^{x_2} e_\kappa(\xi_1) d\xi_1 \; \cdots \; \int_{x_n}^{x_{n+1}} e_\kappa(\xi_n) d\xi_{n}\right)\\*[2mm]
=\underbrace{ \det(L_\kappa)}_{=(-1)^n} \det\left(e_{\kappa-1}\Big|_{x_1}^{x_2}\;\; \cdots \;\; e_{\kappa-1}\Big|_{x_n}^{x_{n+1}}\right)\\*[2mm]
= \det
\begin{pmatrix}
e_{\kappa-1}(x_1) & e_{\kappa-1}\Big|_{x_1}^{x_2} & \cdots & e_{\kappa-1}\Big|_{x_n}^{x_{n+1}} \\*[3mm]
1 & 0 & \cdots & 0
\end{pmatrix}\\*[2mm]
= \det
\begin{pmatrix}
e_{\kappa-1}(x_1) & e_{\kappa-1}(x_2) &\cdots & e_{\kappa-1}(x_{n+1}) \\*[2mm]
1 & 1 & \cdots & 1
\end{pmatrix}.
\end{multline*}
In the last step we added the first column to the second, then
 the second to the third, etc.
\end{proof}

\begin{corollary}\label{cor:2} Let $g_\mu$ be as in \eqref{eqn:gmu} and put $s_{\hat m}=0$ if $\mu=1$.
Then, one has the integration formula
\[
\int_{s_1}^{\infty} dt_1 \int_{s_2}^{s_1} dt_2\cdots \int_{s_{\hat m}}^{s_{\hat{m}-1}} dt_{\hat{m}}\, g_{1-\mu}(t_1,\ldots,t_{\hat m}) = \delta_\mu \, g_\mu(s_1,\ldots,s_m)\qquad (\mu=0,1)
\]
with $\delta_\mu = \left(\pi/2\right)^{\mu/2}$.
\end{corollary}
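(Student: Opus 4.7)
The plan is to identify the integrand $g_{1-\mu}(t_1,\ldots,t_{\hat m})$ as a determinant of the type appearing in Lemma~\ref{lem:1} and then apply that lemma directly, after which only a cofactor expansion remains. Under the reversal $u_j = t_{\hat m + 1 - j}$, the integration region becomes $u_j \in [x_j,x_{j+1}]$ with weakly increasing endpoints: for $\mu=0$ these are $x_1=s_m,\,x_2=s_{m-1},\,\ldots,\,x_m=s_1,\,x_{m+1}=\infty$, while for $\mu=1$ they are $x_1=0,\,x_2=s_m,\,\ldots,\,x_{m+1}=s_1,\,x_{m+2}=\infty$. Meanwhile, pulling the scalar factor $u_j^{1-\mu}e^{-u_j^2/2}$ into column $j$ of the Vandermonde $\Delta(u_1^2,\ldots,u_{\hat m}^2) = \det(u_j^{2(i-1)})_{i,j}$ gives
\[
g_{1-\mu}(t_1,\ldots,t_{\hat m}) = \det\bigl(e_{1-\mu}^{(\hat m)}(u_1),\,\ldots,\,e_{1-\mu}^{(\hat m)}(u_{\hat m})\bigr),
\]
so that Lemma~\ref{lem:1} with $\kappa = 1-\mu$ and $n=\hat m$ applies verbatim.

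The lemma collapses the iterated integral to the single determinant
\[
\det\begin{pmatrix} e_{-\mu}^{(\hat m)}(x_1) & \cdots & e_{-\mu}^{(\hat m)}(x_{\hat m+1}) \\ 1 & \cdots & 1 \end{pmatrix},
\]
which must then be simplified using the values at $0$ and $\infty$. At $x=\infty$, every polynomial-times-Gaussian entry vanishes, leaving only $\eta_{-1}(\infty) = -\sqrt{\pi/2}$ in the top row (present only when $\mu=1$) and the $1$ in the bottom row. At $x=0$ (only present when $\mu=1$), the column is $(0,\ldots,0,1)^\top$ since $\eta_{-1}(0)=0$.

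For $\mu=0$, a single cofactor expansion along the $\infty$-column collapses the $(m+1)\times(m+1)$ determinant to the $m\times m$ minor $\det(e_0^{(m)}(s_m),\ldots,e_0^{(m)}(s_1))$; factoring $e^{-s_k^2/2}$ out of each column recovers $g_0(s_1,\ldots,s_m) = \delta_0\,g_0(s_1,\ldots,s_m)$, since $\delta_0=1$. For $\mu=1$, I would expand first along the $x_1=0$ column and then along the $\infty$-column of the surviving minor: the two expansions together produce the factor $(-1)^{m+3}\cdot(-1)^{m+2}\cdot(-\sqrt{\pi/2}) = \sqrt{\pi/2}$ times $\det(e_1^{(m)}(s_m),\ldots,e_1^{(m)}(s_1)) = g_1(s_1,\ldots,s_m)$, which is precisely $\delta_1\,g_1(s_1,\ldots,s_m)$.

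The main obstacle I expect is the sign bookkeeping in the $\mu=1$ case, where three independent signs must conspire to produce the positive constant $\delta_1=\sqrt{\pi/2}$: the $(-1)^{m+3}$ from the first cofactor expansion, the $(-1)^{m+2}$ from the second, and the $-\sqrt{\pi/2}$ contributed by $\eta_{-1}(\infty)$. Beyond this one delicate computation, the algebraic skeleton of the proof is determined entirely by Lemma~\ref{lem:1} and by the fact that the only surviving boundary contributions at $0$ and $\infty$ come from the top ($\eta_{-1}$) and bottom (all-ones) rows of the augmented matrix.
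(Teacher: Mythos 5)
Your proof is correct and follows exactly the paper's strategy: express $g_{1-\mu}$ as a determinant of the vectors $e_{1-\mu}^{(\hat m)}$, apply Lemma~\ref{lem:1}, and cofactor-expand along the distinguished columns at $0$ and $\infty$. Your sign bookkeeping is in fact slightly more careful than the paper's intermediate display, which records the first cofactor sign as $(-1)^m$ where it should be $(-1)^{m+1}$, though the paper's final constant $\sqrt{\pi/2}$ is correct and agrees with your product $(-1)^{m+3}\cdot(-1)^{m+2}\cdot\bigl(-\sqrt{\pi/2}\bigr)=\sqrt{\pi/2}$.
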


\begin{proof} Using the notation of Lemma~\ref{lem:1}, we first observe that 
\begin{equation}\label{eqn:gmu_as_det}
g_\mu(z_1,\ldots,z_m) = \det\left(e^{(m)}_\mu(z_m)\;\cdots\; e^{(m)}_\mu(z_1)\right).
\end{equation}
Now, Lemma~\ref{lem:1} yields, first using $e_0^{(m)}(\infty)=0$, that for $\mu=0$
\begin{multline*}
\int_{s_1}^{\infty} dt_1 \int_{s_2}^{s_1} dt_2\cdots \int_{s_m}^{s_{m-1}} dt_m\, \,\det\left(e_1^{(m)}(t_m)\;\cdots\; e_1^{(m)}(t_1)\right)\\*[2mm]
 = 
\det
\begin{pmatrix}
e_{0}^{(m)}(s_m) & \cdots & e_{0}^{(m)}(s_{1}) & 0 \\*[1mm]
1 &  \cdots & 1 & 1
\end{pmatrix}
 = 
\det\left(e_{0}^{(m)}(s_m)\;\cdots\;e_{0}^{(m)}(s_{1})\right)
\end{multline*}
and then, using $e_{-1}^{(m+1)}(0)=0$ and $e_{-1}^{(m+1)}(\infty) = -(\sqrt{\pi/2},0)'$, that for $\mu=1$
\begin{multline*}
\int_{s_1}^{\infty} dt_1 \int_{s_2}^{s_1} dt_2\cdots \int_{0}^{s_m} dt_{m+1} \, \det\left(e_0^{(m+1)}(t_{m+1})\;\cdots\; e_0^{(m+1)}(t_1)\right) \\*[2mm]
 = 
\det
\begin{pmatrix}
0 & e_{-1}^{(m+1)}(s_m) & \cdots & e_{-1}^{(m+1)}(s_{1}) & e_{-1}^{(m+1)}(\infty)\\*[1mm]
1 & 1 & \cdots & 1 & 1
\end{pmatrix}\\*[2mm]
=
(-1)^m\det
\begin{pmatrix}
 e_{-1}^{(m+1)}(s_m) & \cdots & e_{-1}^{(m+1)}(s_1) & e_{-1}^{(m+1)}(\infty)
\end{pmatrix}\\*[2mm]
=
(-1)^m\det
\begin{pmatrix}
\eta_{-1}(s_m) & \cdots & \eta_{-1}(s_1) & -\sqrt{\frac{\pi}2} \\*[1mm]
 e_{1}^{(m)}(s_m) & \cdots & e_{1}^{(m)}(s_1) & 0
\end{pmatrix}
 = \sqrt{\frac{\pi}2}
 \det\left(e_{1}^{(m)}(s_m)\;\cdots\; e_{1}^{(m)}(s_1)\right),
\end{multline*}
which finishes the proof of the corollary.
\end{proof}

\subsection{Integrating out the even singular values}

The following second corollary of Lemma~\ref{lem:1} will allow us to integrate out the {\em even} singular
values from the density $q(s;t)$.

\begin{corollary}\label{cor:even_out} Let $g_\mu$ be as in \eqref{eqn:gmu} and put $t_{m+1}=0$ if $\mu=0$.
Then, for $\mu=0,1$, one has the integration formula
\[
\int_{t_2}^{t_1} ds_1 \int_{t_3}^{t_2} ds_2\cdots \int_{t_{m+1}}^{t_m} ds_{m}\, g_{\mu}(s_1,\ldots,s_m) = \det
\begin{pmatrix}
e_{1-\mu}^{(\hat{m}-1)}(t_{\hat{m}}) &\cdots & e_{1-\mu}^{(\hat{m}-1)}(t_1) \\*[2mm]
\delta_{1-\mu}(t_{\hat{m}})  & \cdots & \delta_{1-\mu}(t_1)
\end{pmatrix}
\]
with $\delta_0(t)=1$ and $\delta_1(t) = \sqrt{\pi/2}\erf(t/\sqrt{2})$.
\end{corollary}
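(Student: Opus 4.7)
The strategy is to reduce the claim to Lemma~\ref{lem:1} by a relabeling of the integration variables, and then, only in the case $\mu=0$, perform a small cleanup using the boundary condition $t_{m+1}=0$ and the identity $\eta_{-1}=-\delta_1$.

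First, I reverse the order of columns in the determinantal representation $g_\mu(s_1,\ldots,s_m)=\det\bigl(e_\mu^{(m)}(s_m)\;\cdots\;e_\mu^{(m)}(s_1)\bigr)$ from~\eqref{eqn:gmu_as_det} by the substitution $u_j = s_{m+1-j}$, $x_j = t_{m+2-j}$ (so $x_1=t_{m+1}$, $x_{m+1}=t_1$). Under this change of variables the nested integral becomes
\[
I \;=\; \int_{x_1}^{x_2}\!du_1\cdots\int_{x_m}^{x_{m+1}}\!du_m\;\det\bigl(e_\mu^{(m)}(u_1)\,\cdots\,e_\mu^{(m)}(u_m)\bigr),
\]
which is \emph{exactly} the left-hand side in Lemma~\ref{lem:1} with $\kappa=\mu$ and $n=m$. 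Applying that lemma yields
\[
I \;=\; \det\begin{pmatrix}e_{\mu-1}^{(m)}(x_1) & \cdots & e_{\mu-1}^{(m)}(x_{m+1})\\ 1 & \cdots & 1\end{pmatrix}.
\]

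The case $\mu=1$ is then immediate: in this case $\hat{m}-1=m$, so $e_{1-\mu}^{(\hat{m}-1)}=e_0^{(m)}=e_{\mu-1}^{(m)}$, and since $\delta_0\equiv 1$ the bottom row of $1$'s is exactly $(\delta_0(x_1),\ldots,\delta_0(x_{m+1}))$. Rewriting the columns in the original $t$-variables (recalling $x_j=t_{m+2-j}$) reproduces the asserted formula.

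For $\mu=0$ the main subtlety is a dimensional reduction. Here $\hat{m}-1=m-1$, so the right-hand side has $e_1^{(m-1)}$ on top of the $\delta_1$ row, whereas Lemma~\ref{lem:1} naturally produces $e_{-1}^{(m)}$ on top of a row of $1$'s. I would exploit that $x_1=t_{m+1}=0$ makes $e_{-1}^{(m)}(x_1)$ the zero vector (using $\eta_{-1}(0)=0$); expanding along the first column collapses the determinant to an $m\times m$ one whose columns are $e_{-1}^{(m)}(x_2),\ldots,e_{-1}^{(m)}(x_{m+1})$ (with a sign $(-1)^m$ from the cofactor). Then I split each column $e_{-1}^{(m)}(x_j)$ into its first entry $\eta_{-1}(x_j)=-\delta_1(x_j)$ and its remaining $m-1$ entries, which form $e_1^{(m-1)}(x_j)$. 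A cyclic reordering of the $m$ rows so that the $\delta_1$ row moves from top to bottom contributes a sign $(-1)^{m-1}$, the factored $-1$ from $\eta_{-1}=-\delta_1$ contributes another sign, and these signs combine with the $(-1)^m$ from the cofactor expansion to give $+1$. Rewriting the columns back in $t$-variables then produces exactly the claimed determinant.

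The only place requiring care is this sign bookkeeping in the $\mu=0$ reduction; once it is carried out correctly, the corollary follows directly from Lemma~\ref{lem:1} with no further computation.
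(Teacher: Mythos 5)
Your proposal is correct and follows essentially the same approach as the paper's own proof: both reduce the nested integral to Lemma~\ref{lem:1} (your explicit relabeling $u_j=s_{m+1-j}$, $x_j=t_{m+2-j}$ is what the paper does implicitly by applying the lemma), giving a $(m+1)\times(m+1)$ determinant with a row of $1$'s, which already settles $\mu=1$; for $\mu=0$ both proofs use $t_{m+1}=0$ to annihilate the first column, expand in it, split off the $\eta_{-1}=-\delta_1$ row, and fix the signs via the cofactor $(-1)^m$, the extracted $-1$, and the $(-1)^{m-1}$ from cycling the rows, which indeed cancel to $+1$. The only difference is presentational — you carry out the substitution and the sign bookkeeping in prose, the paper in a displayed chain — but the mathematics is identical.
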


\begin{proof} Using (\ref{eqn:gmu_as_det}) and Lemma~\ref{lem:1} we obtain
\begin{multline*}
\int_{t_2}^{t_1} ds_1 \cdots \int_{t_{m+1}}^{t_m} ds_{m}\, g_{\mu}(s_1,\ldots,s_m)\\*[2mm] =
\int_{t_2}^{t_1} ds_1 \cdots \int_{t_{m+1}}^{t_m} ds_{m}\,
\det\left(e^{(m)}_\mu(s_m)\;\cdots\; e^{(m)}_\mu(s_1)\right)\\*[2mm]
= \det
\begin{pmatrix}
e_{\mu-1}^{(m)}(t_{m+1}) & \cdots & e_{\mu-1}^{(m)}(t_1) \\*[1mm]
1 & \cdots & 1
\end{pmatrix},
\end{multline*}
which is already the assertion for $\mu=1$. For $\mu=0$, the assertion follows from further calculating
\begin{multline*}
\det
\begin{pmatrix}
e_{-1}^{(m)}(t_{m+1}) & e_{1}^{(m)}(t_{m}) & \cdots & e_{-1}^{(m)}(t_1) \\*[1mm]
1 & 1 & \cdots & 1
\end{pmatrix}
=
\det
\begin{pmatrix}
0 & e_{1}^{(m)}(t_{m}) & \cdots & e_{-1}^{(m)}(t_1) \\*[1mm]
1 & 1 & \cdots & 1
\end{pmatrix} \\*[2mm]
= (-1)^m \det\left(e^{(m)}_{-1}(t_m)\;\cdots\; e^{(m)}_{-1}(t_1)\right) =
(-1)^m\det
\begin{pmatrix}
-\delta_{1}(t_m) & \cdots & -\delta_{1}(t_1)  \\*[1mm]
 e_{1}^{(m-1)}(t_m) & \cdots & e_{1}^{(m-1)}(t_1)
\end{pmatrix}\\*[2mm]
= \det
\begin{pmatrix}
 e_{1}^{(m-1)}(t_m) & \cdots & e_{1}^{(m-1)}(t_1)\\*[1mm]
\delta_{1}(t_m) & \cdots & \delta_{1}(t_1)  \\*[1mm]
\end{pmatrix}
\end{multline*}
which finishes the proof.
\end{proof}
Now, by means of this corollary, the marginal density of the odd singular values supported on $t_1\geq  t_2 \geq \ldots \geq t_{\hat{m}}\geq  0$
is given as  
\begin{multline}\label{eqn:odd_density}
q_{\text{odd}}(t_1,\ldots,t_{\hat{m}}) =
c_n n!2^n \cdot g_{1-\mu}(t_{\hat{m}},\ldots,t_1) \cdot
\det
\begin{pmatrix}
e_{1-\mu}^{(\hat{m}-1)}(t_{\hat{m}}) &\cdots & e_{1-\mu}^{(\hat{m}-1)}(t_1) \\*[2mm]
\delta_{1-\mu}(t_{\hat{m}})  & \cdots & \delta_{1-\mu}(t_1)
\end{pmatrix}\\*[2mm]
=
c_n n!2^n \cdot 
\det
\begin{pmatrix}
e_{1-\mu}^{(\hat{m}-1)}(t_{\hat{m}}) &\cdots & e_{1-\mu}^{(\hat{m}-1)}(t_1) \\*[2mm]
\gamma_{1-\mu}(t_{\hat{m}})  & \cdots & \gamma_{1-\mu}(t_1)
\end{pmatrix} 
\cdot
\det
\begin{pmatrix}
e_{1-\mu}^{(\hat{m}-1)}(t_{\hat{m}}) &\cdots & e_{1-\mu}^{(\hat{m}-1)}(t_1) \\*[2mm]
\delta_{1-\mu}(t_{\hat{m}})  & \cdots & \delta_{1-\mu}(t_1)
\end{pmatrix}
\end{multline}
with
\[
\gamma_\mu(t) = t^{\mu+2\hat{m}-2} e^{-t^2/2},\qquad
\delta_\mu(t) = 
\begin{cases}
1 &  \quad\text{if $\mu=0$,} \\*[2mm]
 \sqrt{\frac{\pi}2} \erf\left(\frac{t}{\sqrt{2}}\right) & \quad\text{if $\mu=1$.}
\end{cases}
\]
Note that the two determinantal factors differ just in their last rows. It is this difference that prevents the expression from
becoming a perfect square, which is in marked contrast with the marginal density of the even singular values.


\section{Gap Probabilities}\label{section:gapprobabilities}

Theorem~\ref{thm:main1} has an interesting implication in terms of gap probabilities, that is, in terms of the probabilities
\[
E_\text{RMT}^n(k;J),\qquad E_{\text{RMT}}^{\text{limit}}(k;J),
\]
that the interval $J$ contains exactly $k$ eigenvalues drawn from the random matrix ensemble RMT of finite order $n$,
or in some scaling limit. Here, RMT will be the \GOE{}, the $\aGUE$ or the \LUE{} with parameter $a$.

To begin with, by a simple change of coordinates, see \cite[p.~8]{EL-GUEsing}, there holds
\begin{equation}\label{eqn:LUE}
E_{\aGUE}^{2m+\mu}(k;(0,s)) = \left.E_{\LUE}^{m}\big(k;(0,s^2)\big)\right|_{a=\mu-\tfrac12}\qquad (\mu=0,1).
\end{equation}
By looking at pairs of consecutive values it is easy to see that the event that exactly $k$ values of the decimated ensemble $\even|\GOE_{n}|$, $n=2m+\mu$, are contained in $(0,s)$ is given by the union of the events that
exactly $2k+\mu-1$ or that exactly $2k+\mu$ values of $|\GOE_{n}|$ are in that interval. Since these two events are {\em mutually exclusive}
and since the singular values of GOE contained in $(0,s)$ correspond to the eigenvalues
in $(-s,s)$, we thus get from \eqref{eqn:even_aGUE}
and (\ref{eqn:LUE}) proof of 
\begin{multline}\label{eqn:gap}
E^{2m+\mu}_{\GOE}(2k+\mu-1;(-s,s)) + E_{\GOE}^{2m+\mu}(2k+\mu;(-s,s)) = E_{\aGUE}^{2m+\mu}(k;(0,s)) \\*[2mm]
= \left. E_\text{LUE}^{m}(k;(0,s^2))\right|_{a=\mu-\tfrac12} \qquad (\mu=0,1).
\end{multline} 
For even order ($\mu=0$), a first proof of this formula was given by Forrester \cite[Eq.~(1.14)]{EvenSymm}. 
For odd order ($\mu=1$), Forrester communicated to us further proof of the $k=0$ case, a remarkable tour de force extending the techniques from \cite{EvenSymm}
based on generating functions, Pfaffian calculus, and Fredholm determinants---later he was able to use the
same approach to establish the general $k$ case; for this and the extension to the symmetric Jacobi and to the Cauchy ensembles see \cite{1503.07383}.

  We first identified the $\mu=1$ form of \eqref{eqn:gap}
  via a heuristic duality principle based on three~observations.  
First, the LUE of order $m$ and parameter $a = p - m \in \N$ is modeled by the eigenvalues of
$m\times m$-Wishart matrices
$W = X'X$, where the random $p\times m$-matrices $X$ have independent \emph{complex} standard normal entries. 
Second, the eigenvalues of 
$\tilde{W}= XX'$ are those of $W$ padded with $a = p-m$ zeros; that is, the $(k+a)$-th eigenvalue of $\tilde W$  is distributed as the $k$-th eigenvalue of $W$. Last,  since $\tilde W$ is constructed the same way as $W$, but with dimension $\tilde  m = m + a$ and parameter $\tilde a = -a$, we are thus led, at least {\em formally}, to the duality principle
\begin{equation}\label{eqn:heuristic}
\left. E_{\text{LUE}}^{m+\alpha}(k+\alpha;(0,t))\right|_{a=-\alpha} =  \left. E_{\text{LUE}}^{m}(k;(0,t)) \right|_{a=\alpha}.
\end{equation}
Extrapolated to general $\alpha>-1$, it can be taken as a natural definition of an otherwise undefined expression. Now, formally evaluating the $\mu=0$ form of \eqref{eqn:gap} at half-integer values of $m$ and $k$, and invoking the heuristic duality principle \eqref{eqn:heuristic},
led us to predict the $\mu=1$ form. Since it held up under numerical scrutiny, trying to prove this prediction was a key motivation to our present work.

As already noted by Forrester \cite[Eq.~(1.16)]{EvenSymm},
the bulk scaling of $\GOE$ and the hard-edge scaling of LUE allow us to turn (\ref{eqn:gap}), as $n\to\infty$, into the limit
relation
\begin{multline}\label{eqn:bulk}
E_{\GOE}^{\text{bulk}}(2k-1+\mu;(-s,s)) + E_{\GOE}^{\text{bulk}}(2k+\mu;(-s,s)) \\*[2mm]
= E_{\text{LUE}}^{\text{hard}}(k;(0,\pi^2s^2),\mu-\tfrac12)\qquad (\mu = 0,1);
\end{multline}
a remarkable formula previously established by Mehta \cite[Eqs. (7.5.27/29), (20.1.20/21)]{Mehta} using two different, but much more involved methods. 
In contrast to \eqref{eqn:bulk}, which offers many advantages for the numerical calculation of
gap probabilities of the $\GOE$ in the bulk scaling limit \cite[Sect.~5]{Bornemann}, the finite dimensional version \eqref{eqn:gap} is not yet a closed recursion that would allow us to calculate the gap probabilities of the $\GOE$
on symmetric intervals: a complimentary expression evaluating
\[
E_{\GOE}^{2m+\mu}(2k-\mu;(-s,s)) + E_{\GOE}^{2m+\mu}(2k+1-\mu;(-s,s))\qquad (\mu=0,1)
\]
is still missing.  By the same arguments that justify \eqref{eqn:gap}
such an expression would establish the gap probabilities of the
decimation ensemble $\odd |\GOE_n|$, whose joint distribution is given
by \eqref{eqn:odd_density}.

\begin{figure}[t]
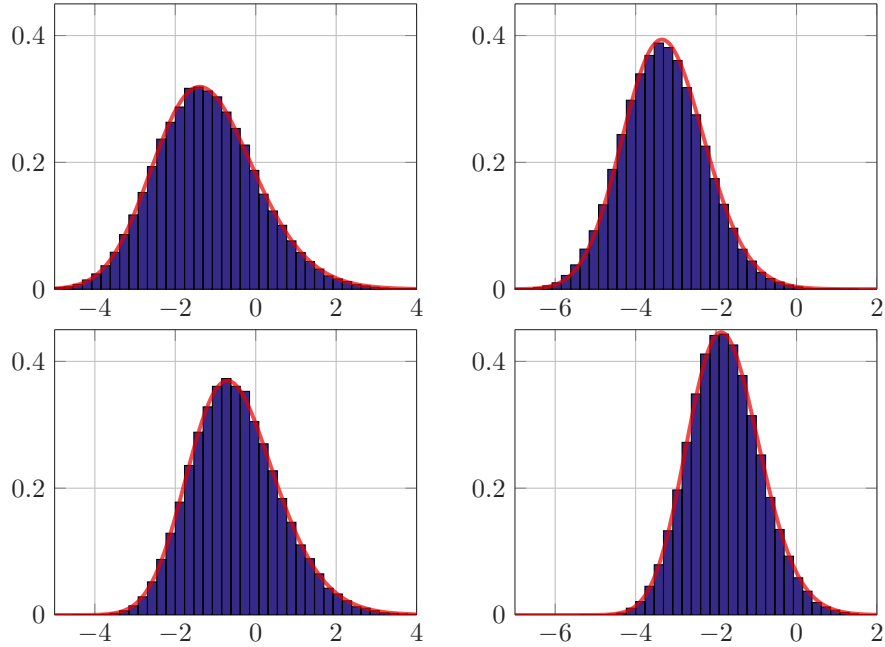

\centering
  \setlength\figurewidth{0.4\textwidth}
  \input{Images/lambda1}\hfil\input{Images/lambda2}\\ 
  \input{Images/sigma1}\hfil\input{Images/sigma2}\\*[-2mm]
  \caption{Fluctuation statistics (densities) of $100\,000$ samples of
    $\GOE_n$, $n=50$, vs. the theoretically predicted soft-edge
    scaling limits; top row: largest eigenvalue vs. $F_1(1;s)$ (left),
    second-largest eigenvalue vs. $F_4(1;s)$ (right); bottom row:
    largest singular value vs. $F_1(1;s)^2$ (left);
    second-largest singular value vs. $F_2(1;s)$ (right). We have
    consistently used the $O(n^{-2/3})$-scaling
    of Johnstone and Ma \protect\cite[Thm.~2]{JohnstoneMa}.
  }\label{fig}
\end{figure}

To finish the paper, it is amusing to note that \emph{all} three cases of the Tracy--Widom distributions 
\[
F_\beta(1;s) = E_\beta^{\text{soft}}(0;(s,\infty)) \qquad (\beta=1,2,4),
\]
can be sampled from the soft-edge scaling limit of the spectrum of just the GOE, i.e., the $\beta=1$ case
(the case $\beta=2$ corresponds to the GUE, $\beta=4$ to the GSE), see Fig.~\ref{fig}.
First, let $\Lambda_1$, $\Lambda_2$ denote
the largest and second-largest soft-edge scaled eigenvalues of the GOE.
In the large-matrix limit, as $n\to\infty$, they are asymptotically
distributed as
\[
\Lambda_1 \overset{\rm d}{\sim} F_1(1;s),\qquad \Lambda_2 \overset{\rm d}{\sim} F_4(1;s).
\]
The first assertion is the definition of the distribution $F_1(1;s)$, while the second follows
from the decimation relation, see \cite[Thm.~5.2]{FR} and \cite[p.~44]{FR2005}, $$\GSE_m = \even \GOE_{2m+1}.$$ Second, let
 $\Sigma_1$, $\Sigma_2$ denote the largest and second-largest 
scaled singular values of the GOE. They are asymptotically distributed as
\[
\Sigma_1 \overset{\rm d}{\sim} F_1(1;s)^2,\qquad \Sigma_2 \overset{\rm d}{\sim} F_2(1;s).
\]
Here, the first assertion follows from the asymptotic independence of the extreme eigenvalues of the GOE and the
second follows from \eqref{eqn:even_aGUE} as follows: $\Sigma_2$ behaves like the largest scaled eigenvalue of the anti-\GUE{}
which, like that of the GUE, is governed by the Tracy--Widom distribution $F_2(1;s)$.

\section*{Acknowledgements}

We would like to thank Peter Forrester for communicating to us his
proof of the odd dimensional $k=0$ case of  \eqref{eqn:gap} and for
encouraging our work.  We would also like to thank Alan Edelman and
David Jackson for asking so many of the right questions.  It would
have been easy to stop with a partial theory of determinants, if they
hadn't anticipated the parallels between the \GOE{} and \GUE{}.  This
research was supported by the DFG-Collaborative Research Center, TRR
109, ``Discretization in Geometry and Dynamics.''  Funding for the
second author was also provided by the National Science Foundation
through grants DMS--1035400 and DMS--1016125.

\bibliographystyle{amsplain}
\bibliography{GOESingularValues}

\end{document}